\documentclass[11pt]{amsart}

\usepackage{pgf}
\usepackage{booktabs}
\usepackage{latexsym}
\usepackage{amssymb}
\usepackage{amsfonts}
\usepackage{amsmath}
\newtheorem{theorem}[equation]{Theorem}
\newtheorem{lemma}[equation]{Lemma}
\newtheorem{corollary}[equation]{Corollary}

\def\GL{{\rm GL}}
\def\lcm{{\mathrm lcm}}
\def\F{{\mathbb F}}
\def\Z{{\mathbb Z}}
\def\N{{\mathbb N}}

\def\GL{{\mathrm {GL}}}

\def\SL{{\mathrm {SL}}}
\def\PSL{{\mathrm {PSL}}}

\def\PSU{{\mathrm {PSU}}}
\def\Sp{{\mathrm {Sp}}}
\def\GU{{\mathrm {GU}}}
\def\SU{{\mathrm {SU}}}

\def\GO{{\mathrm {GO}}}

\def\D{{\mathrm {D}}}
\def\GAP{{\sf GAP}}
\def\K{{\mathrm K}}



\pgfdeclareimage[height=4.5cm]{}{fig2}

\begin{document}

\title{The Divisibility Graph of  finite groups of Lie Type}

\author[A.Abdolghafourian]{Adeleh~Abdolghafourian}
\author[M.A. Iranmanesh]{Mohammad~A.~Iranmanesh}
\author[A.C. Niemeyer]{Alice~C.~Niemeyer}

\maketitle 

\begin{abstract}
The \emph{Divisibility Graph}
of a finite group  $G$ has vertex set the set of
conjugacy class  lengths of non-central elements in  $G$
and two vertices are connected by an edge if one divides the other.
We determine the connected components of the Divisibility Graph of the
finite groups of Lie type in odd characteristic.
\end{abstract}

\bigskip\noindent
{\small {\sc Keywords:}\quad
Divisibility Graph, Finite Group of  Lie Type\\
{MSC2010: primary: 20G40, secondary:05C25}
}

\section{Introduction}\label{sec:introd}

Given a set $X$ of  positive integers, several graphs corresponding to
$X$  can  be  defined.   For example  the  \emph{prime  vertex  graph}
$\Gamma(X)$,  the  \emph{common  divisor graph}  $\Delta(X)$  and  the
\emph{bipartite  divisor   graph}  $B(X)$  (see   \cite{IP,Lewis}  and
references   therein  for   more   details).   In   2011  Camina   and
Camina~\cite{CaminaCamina} introduced the  \emph{divisibility graph} $\D(X)$ of
$X$ as the directed graph with  vertex set $X\backslash \{1\}$ with an
edge from  vertex $a$ to vertex  $b$ whenever $a$ divides  $b$.  For a
group $G$  let $cs(G)$ denote  the set  of conjugacy class  lengths of
non-central   elements    in   $G$.     Camina   and    Camina   asked
\cite[Question~7]{CaminaCamina} how  many components  the divisibility
graph  of $cs(G)$  has.   Clearly  it is  sufficient  to consider  the
underlying undirected  graph and for  the remainder of this  paper the
\emph{Divisibility  Graph}  $\D(G)$  of  a group  $G$  refers  to  the
undirected Divisibility Graph  $\D(cs(G))$.

Note that the set of vertices
${cs}(G)$ may be replaced by the set
$\mathcal{C}(G)$ of orders of the centralisers of non-central elements of $G$.

 In \cite{BDIP,Lewis}  has been shown that  the graphs $\Gamma(cs(G)),
 \Delta(cs(G))$ and  $B(cs(G))$ have at most  two connected components
 when $G$ is  a finite group. Indeed, when $G$  is a nonabelian finite
 simple group,  then $\Gamma(cs(G))$ is complete  (see \cite{BHM,FA}).
 It is  clear that $\D(G)$  is a  subgraph of $\Gamma(cs(G)))$  and we
 hope that the structure of $\D(G)$ reveals more about the group $G$.

The first and  second authors have shown that  for every comparability
graph, there is a finite set $X$ such that this graph is isomorphic to
$\D(X)$ in \cite{AIrocky}.  They  found some relationships between the
combinatorial properties of $D(X),  \Gamma(X)$ and $\Delta(X)$ such as
the  number   of  connected   components,  diameter  and   girth  (see
\cite[Lemma~1]{AIrocky}) and   found  a  relationship  between
$\D(X\times Y)$ and product of  $\D(X)$ and $\D(Y)$ in \cite{AIrocky}.
They examined the Divisibility Graph $\D(G)$  of a finite group $G$ in
\cite{AI1} and  showed that when  $G$ is the symmetric  or alternating
group, then  $\D(G)$ has  at most two  or three  connected components,
respectively.  In both cases, at most one connected component is not a
single vertex.

Here we are interested in the  Divisibility Graphs of finite groups of
Lie type.  The graph of  certain finite simple  groups of Lie  type is
known, namely the first and second authors described in \cite{AI2} the
structures   of  the   Divisibility  Graphs   for  $\PSL(2,   q)$  and
$Sz(q)$.  Let $K_i$  denote the  complete graph  on $i$  vertices. They
prove  in  \cite[Theorem~6]{AI2} that  for
$G=\PSL(2,q)$ the graph $\D(G)$ is either $3\K_1$ or $\K_2+2\K_1$.  For
the other finite groups of Lie type in odd characteristic we prove the
following theorem:

\begin{theorem}\label{main}
Let $G$ be a finite group of Lie type over a finite field of order $q$
or $q^2$ in characteristic $p$ where $p$ is an odd prime.  Suppose further that
the Lie rank $\ell$ of $G$ is
either as in Table~$\ref{tab:simple}$ or
at least $2$.   Then the Divisibility Graph  $\D(G)$
has  at most  one  connected  component which is not a
single vertex.
\end{theorem}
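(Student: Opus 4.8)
We outline the strategy.

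\textbf{Reduction to simple groups.} Since $\D(G)$ is determined by $\mathcal C(G)$, the set of centraliser orders of non-central elements, and since passing from a finite group of Lie type to the associated finite simple group alters these orders only by bounded central and diagonal factors, it suffices --- after disposing of the finitely many groups listed in Table~\ref{tab:simple} by direct computation with \GAP{} --- to prove the statement when $G$ is a finite simple group of Lie type of rank $\ell\ge 2$ in odd characteristic $p$. Write $|G|=\tfrac1d\,q^{N}\prod_i\Phi_i(q)^{a_i}$ with $N=|\Phi^+|$, and recall that for $g=su$ in Jordan form one has $C_G(g)=C_{C_G(s)}(u)$, where $C_G^{\circ}(s)$ is a connected reductive group of maximal rank $\ell$; hence $|C_G(g)|_p=q^{m}$ with $m$ the number of positive roots of $C_{C_G(s)}^{\circ}(u)$, and $|C_G(g)|_{p'}$ is a product of cyclotomic factors (times a bounded component-group factor).

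\textbf{Signatures and comparability.} To each non-central class size $c(g)=|G:C_G(g)|$ attach its \emph{signature}: the exponent of $q$ in $c(g)$ together with the set of cyclotomic polynomials $\Phi_i$ dividing $c(g)$. Using Zsigmondy's theorem, two class sizes are comparable in the divisibility order essentially precisely when one signature refines the other --- a primitive prime divisor of $\Phi_i(q)$ for large $i$ forces incomparability unless $\Phi_i(q)$ divides the smaller centraliser order as well. The plan is to show that all but a bounded list of vertices lie in a single component, by exhibiting explicit \emph{hub} vertices that dominate large families of signatures.

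\textbf{Hubs and bridges.} I single out three kinds of elements. (i) A regular unipotent $u$: $|C_G(u)|=q^{\ell}z$ with $z$ bounded, so $c(u)$ has $q$-exponent $N-\ell$ and $p'$-part equal to $|G|_{p'}$ up to a bounded factor, and $c(v)\mid c(u)$ (up to that factor) for every unipotent $v$, since $|C_G(v)|_p\ge q^{\ell}$. (ii) For each maximal parabolic, a semisimple $s$ with $C_G(s)$ the corresponding Levi $L$: then $c(s)=|G:L|$ has the very thin signature $q^{a}\Phi_i(q)$ involving a single cyclotomic factor, hence divides a large family of class sizes. (iii) The mixed elements $g=sv$ with $s$ as in (ii) and $v$ a regular unipotent of $L$: such $c(g)$ is divisible both by $c(u)$ (comparing $p$-exponents and $p'$-parts) and by the class sizes of type (ii), and therefore glues the unipotent world to the semisimple world. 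Going through the families $A_\ell,{}^2A_\ell,B_\ell,C_\ell,D_\ell,{}^2D_\ell,{}^3D_4,G_2,F_4,E_6,{}^2E_6,E_7,E_8,{}^2B_2,{}^2G_2,{}^2F_4$, I verify that every non-central class size either is comparable to one of these hubs --- hence lies in the single component they span --- or has the exceptional signature of a ``Coxeter-type'' maximal torus, i.e. a maximal torus not contained in any proper Levi, in which case $c(s)$ is divisible by no other class size and is not a divisor of any other, so it is an isolated vertex, which the statement permits.

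\textbf{Main obstacle.} The serious point is the connectedness of the hub component, and this is exactly where $\ell\ge 2$ enters: with $\ell\ge 2$ there are enough distinct maximal Levis for the bridge classes of type (iii) to chain the split maximal torus, the various subsystem tori, the unipotent classes and the transvection-type classes into one piece, whereas for $\ell=1$ the split and non-split torus classes are genuinely incomparable and a second non-trivial component can appear --- precisely the $\PSL(2,q)$ and $Sz(q)$ behaviour recorded in \cite{AI2} and excluded here by hypothesis. Carrying this out uniformly requires, for each Lie type, (a) checking that the bounded correction factors coming from $Z(G)$, diagonal automorphisms and centraliser component groups do not obstruct the divisibilities used in the bridges, and (b) determining, via primitive prime divisors, exactly which maximal tori fail to be linked in --- these furnish the isolated vertices. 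This family-by-family bookkeeping, together with the \GAP{} check of the cases in Table~\ref{tab:simple}, completes the argument; a byproduct is a bound on the diameter of the non-trivial component.
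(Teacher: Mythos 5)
Your skeleton is recognisably the right one --- a regular unipotent class as the hub of the non-trivial component, everything else either linked to it or forming an isolated vertex coming from a special maximal torus --- and your item (i) is essentially the paper's Lemma~\ref{lem:uni} (regular unipotent has centraliser $Z(G)\cdot C_{\bf U}(u)$ of order $|Z(G)|q^\ell$, and $q^\ell$ divides every unipotent centraliser). But the mechanism you propose for linking the semisimple classes is both unexecuted and, in two places, based on claims that are false as stated. First, the reduction ``to the simple group up to bounded central and diagonal factors'' is unsound for a \emph{divisibility} graph: multiplying centraliser orders by bounded factors can create or destroy divisibilities, and the graph genuinely depends on the isogeny type (the $\PSL(3,q)$ example in Section~\ref{sec:introd} changes with $\gcd(3,q-1)$, and the adjoint and simply connected forms of $E_7(3)$ behave differently from what the prime graph of the simple group would suggest). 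The paper therefore works directly with $\Omega\le G\le\Delta$ and with $G/Z(G)$, tracking $Z(G)$ explicitly. Second, your comparability criterion --- ``two class sizes are comparable essentially precisely when one cyclotomic signature refines the other'' --- is not a biconditional: Zsigmondy primes give obstructions in one direction only, while matching signatures say nothing about multiplicities or the shared small prime factors of $\Phi_i(q)$ and $\Phi_j(q)$. Since the entire ``family-by-family bookkeeping'' rests on this criterion and is left undone, what you have is a programme rather than a proof. Your description of the isolated vertices is also not quite right: a maximal torus not contained in a proper Levi need not give an isolated vertex (it may have even order, or its elements may have centralisers strictly larger than the torus); the correct criterion, imported from Williams, is that $TZ(G)/Z(G)$ is a $CC$-group of odd order coprime to $|Z(G)|$. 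Finally, Table~\ref{tab:simple} is not a finite list of groups to be checked by computer --- it records rank bounds for the infinite classical families --- and ${}^2B_2$, ${}^2F_4$ (even characteristic) and ${}^2G_2$ (rank $1$) are excluded by the hypotheses, so they should not appear in your case list.

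The ideas you are missing, which let the paper dispense with almost all cyclotomic bookkeeping, are three structural facts about centralisers. (a) If $s$ is non-central and \emph{non-regular} semisimple, some root $\alpha$ satisfies $\alpha(s)=1$, so $C_G(s)$ contains a non-trivial unipotent element $u$; then $C_G(su)=C_G(s)\cap C_G(u)$ links $|s^G|$ and $|u^G|$ through $|(su)^G|$ (Lemma~\ref{lem:sim}(1)), with no divisibility of cyclotomic values needed. (b) If $|C_{\overline G}(\overline s)|$ is even then $s$ commutes (mod centre) with an involution, and for the ranks considered involutions are never regular, so by (a) they, and hence $s$, join the unipotent component. (c) For the remaining regular semisimple $s$ with $C_{\overline G}(\overline s)=\overline T$ of odd order, either $\overline T$ is a $CC$-group (isolated vertex) or some $t\in T$ has a strictly larger centraliser, and $s\sim t$ pulls $s$ into the main component. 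Replacing your signature analysis by (a)--(c) is what turns the outline into a uniform proof.
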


In particular,  we prove that  the non-trivial connected  component of
$\D(G)$  contains  the  lengths  of   the  conjugacy  classes  of  all
non-central  involutions  as well  as  the  lengths of  all  conjugacy
classes of all unipotent elements.  The number of connected components
consisting of a  single vertex corresponds to the  number of conjugacy
classes $T^G$  of tori $T$  in $G$  for which $|TZ(G)/Z(G)|$  is odd,
coprime to  $|Z(G)|$ and  for which  the centraliser  in $G$  of every
non-central element in $T$ is $TZ(G).$

We  now compare  the results  of the  theorem to  known results  about
another  type of  graph, namely  the Prime  Graph first  introduced by
Gruenberg and Kegel in 1975  in an unpublished manuscript.  The vertex
set of  the \emph{Prime  Graph} of a  finite group $G$  is the  set of
primes dividing  the order of the  group and two vertices  $r$ and $s$
are adjacent  if and only  if $G$ contains  an element of  order $rs$.
Williams \cite[Lemma~6]{Williams} investigated  Prime Graphs of finite
simple groups in odd characteristic and Kondrat'ev \cite{Kondratev} and
Lucido \cite{lucido} investigated these graphs for even characteristic
and for almost simple groups, respectively.  A subgroup $T$ of a group
$G$ is called a $CC$-group if $C_G(t) \le T$ for all $t\in T\backslash
Z(G)$.  Williams proved  \cite[Theorem~1]{Williams} that the connected
components of the Prime Graph of a finite simple group $G$ of Lie type
in odd  characteristic $p$  consist at  most of  the set  $\{p\}$, the
connected component containing the prime $2$, and a collection of sets
consisting of  the primes  dividing the  order of  some torus  $T$ for
which  $TZ(G)/Z(G)$  has odd  order  coprime  to  $|Z(G)|$ and  is  a
$CC$-group (see Section~\ref{sec:iso}).  Moreover,  for such groups he
showed that  $\{p\}$ is an isolated  vertex of the Prime  Graph if and
only if  $G\cong \PSL(2, q)$  with $q$ odd.   Hence for the  groups we
consider  (see  Section~\ref{sec:ourgroups}  where  we  exclude  small
dimensions and certain `bad' primes) we may assume that $\{p\}$ is not
an isolated vertex  in the Prime Graph.
In  this  case,
Williams shows  that the  number of connected  components of  a finite
simple group  of Lie type  is at  most two, unless  $G={}^2D_p(3)$ and
$p=2^n+1$ a  prime for $n\ge2$,  for which  the Prime Graph  has three
components and unless  $G=E_8(q)$, for which the Prime  Graph can have
either  four or  five components,  depending on  whether $q\equiv  2,3
\pmod{5}$ or  $q\equiv 0,1,4 \pmod{5}$, respectively.  We verified the
main theorem separately for the case of small dimensions or the `bad'
primes.  Hence we obtain the following corollary.

\begin{corollary}\label{cor:main}
Let $G$ be a finite group of Lie type over a finite field of order $q$
or $q^2$ in characteristic $p$ where $p$ is an odd prime.  Suppose further that
the Lie rank $\ell$ of $G$ is either as in Table~$\ref{tab:simple}$ or
at  least  $2$ and that $q \not= 3$ if $G$ is of type $E_7$.
Then  the  Divisibility  Graph  $\D(G)$ has  as  many
connected components as the Prime Graph of $G$.
\end{corollary}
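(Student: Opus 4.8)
The plan is to derive Corollary~\ref{cor:main} from the combination of Theorem~\ref{main} and Williams' description of the connected components of the Prime Graph of $G$. The key structural fact, already announced after Theorem~\ref{main}, is that the single-vertex components of $\D(G)$ are in bijection with the conjugacy classes $T^G$ of tori $T$ for which $|TZ(G)/Z(G)|$ is odd, coprime to $|Z(G)|$, and the centraliser in $G$ of every non-central element of $T$ equals $TZ(G)$; the last condition says precisely that $TZ(G)/Z(G)$ is a $CC$-group in $G/Z(G)$. On the Prime Graph side, Williams shows that (once $\{p\}$ is not an isolated vertex, which holds for the groups under consideration since we have excluded $\PSL(2,q)$) the components other than the one containing $2$ are exactly the sets of primes dividing the order of such a torus $T$. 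So the plan is to set up this correspondence carefully and check it is a bijection on components.

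First I would record that, by Theorem~\ref{main}, $\D(G)$ has exactly one non-trivial component together with some number, say $t$, of isolated vertices, so $\D(G)$ has $t+1$ components. Next I would invoke the description of the isolated vertices quoted above: each isolated vertex is the common class length of the non-central elements of a torus $T$ of $CC$-type with $|TZ(G)/Z(G)|$ odd and coprime to $|Z(G)|$, and distinct such classes $T^G$ give distinct isolated vertices, so $t$ equals the number of such torus classes. Then I would quote Williams' theorem: the Prime Graph of $G$ has one component containing $2$, possibly the singleton $\{p\}$ — but under our hypotheses $\{p\}$ is not isolated and hence lies in the component of $2$ — and one further component for each conjugacy class of tori $T$ of $CC$-type with $TZ(G)/Z(G)$ of odd order coprime to $|Z(G)|$. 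Hence the Prime Graph also has $t+1$ components, and the corollary follows.

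The two places that need genuine care, rather than citation, are the following. First, I must check that the notion of torus appearing in ``our'' description of the isolated vertices of $\D(G)$ matches the notion of torus in Williams' theorem, and that the $CC$-group condition in Williams' sense (a subgroup $T$ with $C_G(t)\le T$ for all $t\in T\setminus Z(G)$) is equivalent to our condition that $C_G(t)=TZ(G)$ for all non-central $t\in T$; here one uses that $Z(G)\le C_G(t)$ always and that $T$ is abelian, so $C_G(t)\supseteq TZ(G)$ automatically, and Williams' inclusion $C_G(t)\le T\le TZ(G)$ gives equality. Second, I must confirm the bijection is at the level of \emph{components}, i.e. that two distinct torus classes yielding distinct isolated vertices of $\D(G)$ also yield distinct (and non-adjacent-to-$2$) prime-graph components, and conversely; this is where the hypothesis $q\neq 3$ for type $E_7$, and more generally the exclusion of the exceptional cases ${}^2D_p(3)$ with $p=2^n+1$ and $E_8(q)$, really enters — in those cases Williams' extra components do not all arise from $CC$-tori in the clean way, so the count on the Prime Graph side would not match the count of isolated vertices of $\D(G)$.

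The main obstacle I expect is precisely the bookkeeping in the previous paragraph: making sure the ``collection of sets'' of primes in Williams' statement is in exact one-to-one correspondence with the isolated vertices of $\D(G)$, including in the borderline families where the Lie rank is small or the characteristic is ``bad'' (handled case-by-case, as the authors note), and verifying that no two distinct $CC$-tori share all their prime divisors (which would merge prime-graph components without merging $\D(G)$-vertices) under the stated hypotheses. Once that correspondence is pinned down, the corollary is an immediate consequence of Theorem~\ref{main} and \cite[Theorem~1]{Williams}.
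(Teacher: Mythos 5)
Your main line is the paper's own: by Theorem~\ref{main} the components of $\D(G)$ are one non-trivial component together with the isolated vertices, which (via Lemmas~\ref{lem:isotori}, \ref{lem:nonisotori} and \ref{lem:semi2}) correspond to the conjugacy classes of maximal tori $T$ with $\overline{T}$ odd, coprime to $|Z(G)|$ and a $CC$-group; Williams' Theorem~1 identifies the Prime Graph components other than the one containing $2$ (and other than $\{p\}$, which is not isolated here since $\PSL(2,q)$ is excluded by the rank hypotheses) with the prime sets of exactly such tori; comparing the two counts gives the corollary. That skeleton is correct and is precisely how the paper argues.

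The concrete problem is your treatment of the exceptional cases. The corollary does \emph{not} exclude ${}^2D_p(3)$ with $p=2^n+1$ nor $E_8(q)$: these are precisely the groups whose Prime Graph has three, respectively four or five, components, and the corollary asserts that $\D(G)$ has the same number of components for them. Your claim that in these cases ``Williams' extra components do not all arise from $CC$-tori in the clean way, so the count would not match'' is false: in Williams' theorem every component not containing $2$ (and not $\{p\}$) is the set of primes of an odd-order $CC$-torus coprime to the centre, and each such torus class yields an isolated vertex of $\D(G)$ by Lemma~\ref{lem:isotori}, so the counts do match and your proof must cover these groups rather than set them aside. The only genuine exclusion is $E_7(q)$ with $q=3$, and the reason is not a bookkeeping subtlety in the correspondence: $p=3$ is a bad prime for $E_7$, the case was settled by direct computation with L\"ubeck's data, and there the asserted equality actually fails --- $\D(G)$ is connected for $G$ of adjoint or simply connected type $E_7(3)$ while the Prime Graph has three components (Williams, Table~Id). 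So the hypothesis $q\neq 3$ removes a true counterexample, not an inconvenience in matching components. Your remaining worry --- that two distinct $CC$-torus classes might share a prime, merging Prime Graph components without merging isolated vertices --- is legitimate to raise but dissolves quickly: a $CC$-torus is a Hall subgroup for its prime set, so two $CC$-tori sharing a prime are conjugate, and the paper (like Williams) leaves this implicit.
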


We note  that if $G$ is  of adjoint or simply  connected type $E_7(3)$,
the  Divisibility graph of $G$  is  connected,  whereas  the  Prime
Graph  has  three  connected components (see \cite[Table~Id]{Williams}.)

The Prime Graph  of $G$, and thus also the  Divisibility Graph of $G$,
is linked  to yet another  graph defined for $G$,  the \emph{Commuting
  Graph}    introduced    in    1955     by    Brauer    and    Fowler
\cite{BrauerFowler55}.   The vertices of the
Commuting  Graph of  a group  $G$ are the non-central
elements of $G$ and two elements are connected by an edge
if and  only if  they commute. Due  to the work  of Morgan  and Parker
\cite[Theorem~3.7]{MorganParker}   and   Iranmanesh   and   Jafarzadeh
\cite[Lemma~4.1]{IranmaneshJafarzadeh}  we also  obtain the  following
corollary.

\begin{corollary}\label{cor:main2}
Let $G$ be a finite group of Lie type over a finite field of order $q$
or $q^2$ in characteristic $p$ where $p$ is an odd prime.  Suppose further that
$G$ has trivial centre, that the Lie rank $\ell$ of $G$ is either
as in Table~$\ref{tab:simple}$ or
at  least  $2$ and that $p\ge  3$ if $G$ is of type $E_7$.
Then the $G$-classes of connected
components of the commuting graph of $G$
are in one to one correspondence with the connected
components of the  Divisibility  Graph  $\D(G)$.
\end{corollary}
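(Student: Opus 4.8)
The plan is to combine the explicit description of $\D(G)$ obtained in Theorem~\ref{main} (and its proof) with the structure theory of the commuting graph of a group with trivial centre. First I would record what Theorem~\ref{main} gives when $Z(G)=1$: the graph $\D(G)$ has a unique connected component $D_0$ that is not a single vertex; $D_0$ contains the lengths of all conjugacy classes of non-central involutions and of all non-trivial unipotent elements; and every remaining vertex is isolated, with exactly one isolated vertex $|T|$ for each conjugacy class $T^G$ of tori $T$ of $G$ with $|T|$ odd and $C_G(t)=T$ for all $t\in T\setminus\{1\}$, distinct such classes yielding distinct vertices (as used in Corollary~\ref{cor:main}). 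Since $T$ is abelian, the condition that $C_G(t)=T$ for all $t\in T\setminus\{1\}$ is equivalent to $T$ being a $CC$-group, and such a $T$ has order coprime to $2p$; these tori are exactly the torus $CC$-groups occurring in Williams' description of the Prime Graph (see Section~\ref{sec:iso} and \cite{Williams}).

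Next I would analyse the commuting graph. Since $Z(G)=1$, the structural result of Morgan and Parker \cite[Theorem~3.7]{MorganParker} shows that the commuting graph of $G$ has a unique connected component $M$ which is not of the form $H\setminus\{1\}$ for a $CC$-group $H\le G$, while every other component equals $H\setminus\{1\}$ for some such $H$. I would then argue that, for the groups in the statement, the $CC$-groups arising in this way are exactly the tori $T$ described above; this is the step in which that description of $\D(G)$ and Williams' inventory of $CC$-groups are used together. Each such $T$ is abelian, so $T\setminus\{1\}$ is a single connected component (indeed a complete graph), and since $G$ permutes the conjugates of $T$ transitively the corresponding $G$-orbit of components is labelled by the class $T^G$. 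The component $M$, being the unique component not of the form $H\setminus\{1\}$, is invariant under conjugation and hence is a single $G$-orbit. Thus the $G$-classes of connected components of the commuting graph of $G$ consist of one ``main'' class together with one class for each torus class $T^G$ as above --- precisely the index set that also labels the connected components of $\D(G)$.

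The sought correspondence sends the main class to $D_0$ and the class of the component $T\setminus\{1\}$ to the isolated vertex $|T|$; by the two descriptions above, both sides are indexed by the set consisting of one distinguished element together with the conjugacy classes $T^G$ of the tori $T$ described above, so this is a bijection. It is, moreover, the correspondence induced by centraliser orders: a non-central involution (or a non-trivial unipotent element) lies in no such torus $T$, since these have order coprime to $2p$, hence lies in $M$, and its centraliser order lies in $D_0$; whereas every element of a component $T^g\setminus\{1\}$ has centraliser $T^g$ and so centraliser order $|T|$. Using the relationship between components of the commuting graph and centraliser orders recorded in \cite[Lemma~4.1]{IranmaneshJafarzadeh} (together with, where needed, the analysis in the proof of Theorem~\ref{main}), I would deduce that $|C_G(x)|\in D_0$ for every $x\in M$ and $|C_G(x)|=|T|$ for $x$ in a component $T^g\setminus\{1\}$; hence the map sending the $G$-class of the commuting-graph component of $x$ to the $\D(G)$-component of $|C_G(x)|$ is well defined and agrees with the bijection just described, its injectivity coming from the fact that distinct torus classes give distinct isolated vertices, and its surjectivity from the fact that every vertex of $\D(G)$ is realised as some $|C_G(x)|$.

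Finally I would dispose of the remaining configurations exactly as in the proof of Theorem~\ref{main}: the cases of small Lie rank (those listed in Table~\ref{tab:simple}) and of the ``bad'' primes are checked directly, the point being to confirm that no further $CC$-groups --- in particular no non-abelian ones --- produce additional components of the commuting graph. I would also remark that, unlike Corollary~\ref{cor:main}, the present statement need not exclude $E_7(3)$: for $G$ of type $E_7$ with $q=3$ and trivial centre both $\D(G)$ and the commuting graph of $G$ are connected, so the correspondence still holds, even though the Prime Graph of $E_7(3)$ has three connected components. The hardest part is to match the $CC$-groups arising in the Morgan--Parker analysis of the commuting graph with the tori isolated in $\D(G)$, uniformly across all Lie types and ranks (drawing on Williams' determination of the torus $CC$-groups), and to rule out further (for instance non-abelian) $CC$-groups that would split the commuting graph beyond what the description of $\D(G)$ predicts.
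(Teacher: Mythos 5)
Your strategy is sound and uses the same ingredients as the paper, but it is organised differently, and the comparison is worth spelling out. The paper gives no written argument for this corollary: it simply composes two correspondences, namely that for a centreless group the $G$-classes of connected components of the commuting graph match the connected components of the Prime Graph (this is what \cite[Theorem~3.7]{MorganParker} together with \cite[Lemma~4.1]{IranmaneshJafarzadeh} are invoked for), and that the Prime Graph components match those of $\D(G)$ (Theorem~\ref{main} combined with Williams' description, i.e.\ Corollary~\ref{cor:main}). You instead build the bijection directly, matching the non-principal commuting-graph components, which you take to be of the form $H\setminus\{1\}$ for $CC$-groups $H$, with the isolated vertices of $\D(G)$ produced by the odd-order $CC$-tori of Lemma~\ref{lem:isotori}; that is a legitimate and more explicit route, and it has the advantage of exhibiting the correspondence concretely via centraliser orders. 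Two points, however, are announced rather than argued. First, the structural statement you attribute to \cite[Theorem~3.7]{MorganParker} (a unique component not of the form $H\setminus\{1\}$ with all others of that form) should be checked against the actual statement; what those two references are relied upon for in the paper is the passage between commuting-graph component classes and Prime Graph components, so if Theorem~3.7 does not literally give your $CC$-group formulation you must derive it, e.g.\ from the isolated-subgroup theory behind \cite{Williams}. Second, the step you yourself call the hardest -- identifying the $CC$-groups arising as commuting-graph components with the tori of Lemma~\ref{lem:isotori}, ruling out other (in particular non-abelian, e.g.\ Sylow $p$-) $CC$-subgroups, and checking that non-conjugate $CC$-torus classes give distinct isolated vertices -- is left as a plan; it can be filled in from \cite[Lemma~5]{Williams} together with Lemmas~\ref{lem:isotori}, \ref{lem:nonisotori} and \ref{lem:semi2}, but as written it is not carried out, whereas the paper's prime-graph composition sidesteps exactly this work. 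Your closing remark on type $E_7$ with $q=3$ is in the right spirit, but the claim that the commuting graph of the centreless group is then connected also needs a word of justification (for the adjoint group every maximal torus has even order, since its order coincides with that of the corresponding torus of the simply connected cover, which contains the centre of order $2$; hence no torus components or isolated vertices arise).
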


\subsection{Example}

As an example we determine the Divisibility Graph of
the groups $\PSL(3,q)$ and $\PSU(3,q)$ for $q$ odd.
Their conjugacy classes and character tables
can be found in \cite{ss}. In particular, the
centraliser orders of non-trivial elements for both groups are
${\mathcal C}(G)=\{q^3r', q^2, qr'rs, qr', r^2, r'r, r's, t'\}$ where
 $\epsilon=1$ for $G=\PSL(3,q)$ and $\epsilon=-1$ for
$G=\PSU(3, q)$ and $r=q-\epsilon,$ $s=q+\epsilon,$ $t=q^2+\epsilon q+1,$
$a=\gcd(3,r),$  $r'=r/a,$ and $t'=t/a$.
The graph $\D(G)$, shown in Figure \ref{fig:1}, depends on $a$, since when $a=1$
the centraliser orders $r^2$ and $r'r$ agree.
The vertex $t'$ corresponds to a
Coxeter torus in $G$ of odd order which is a $CC$-group.

\begin{figure}[here]
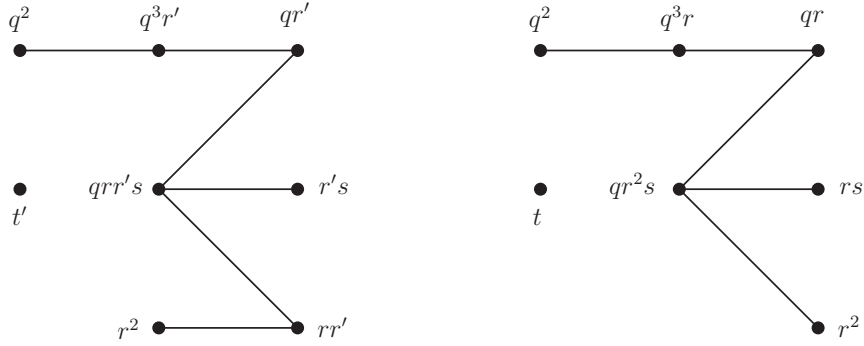

\begin{center}
\pgfuseimage{fig2}
\caption{The divisibility graph for $\PSL(3,q)$ and $\PSU(3,q)$
  (left: $\gcd(3,r)\neq 1$~ right: $\gcd(3,r)=1$).}
\label{fig:1}
\end{center}
\end{figure}

\section{Background}\label{sec:back}

For a finite group $G$ let ${cs}(G) =
\{ |x^G| \mid~x\in G \}\backslash \{1\}$ denote the set of conjugacy class lengths of
non-central elements in $G$.
Let $\D(G)$ denote the Divisibility Graph of $G$,  the graph with vertex set ${cs}(G)$
and edge set ${\mathcal E}(G) =\{ (|x^G|,|y^G|) ;~ either~|x^G|~
divides ~|y^G|~  or
~|y^G|~ divides ~|x^G| \}.$

Note that for $x,y\in G$ there is an edge in $\D(G)$ between $|x^G|$
and $|y^G|$  if and only if
$|C_G (y)|$ divides $|C_G (x)|$ or
$|C_G (x)|$ divides $|C_G (y)|.$
Therefore, the set of vertices
${cs}(G)$ may be replaced by the set
$\mathcal{C}(G)=\{ |C_G(x)| \mid x\ \in G,\, x\not\in Z(G) \}.$
If  $(|x^G|,|y^G|)\in {\mathcal E}(G)$ we write $x\sim y$.
We say non-central elements $x,y\in G$ are \emph{equivalent}
if $|x^G| \not= |y^G|$ and $|x^G|$ and $|y^G|$ are in the same
connected component of $\D(G)$.
 Note that being equivalent induces an equivalence
relation on ${\mathcal C}(G).$

\begin{lemma}\label{lem:sim}
Let $G$ be a finite group and
  $x,y \in G\backslash Z(G)$.
\begin{enumerate}
\item If  $\gcd( |x|,|y| ) = 1$ and $xy = yx$ then
  $C_G(xy) = C_G(x) \cap C_G(y)$ and in particular $x \sim xy \sim y.$
\item $x\sim x^m$ for any $m \in
  \N\backslash \{1\}$ such that $x^m\not\in Z(G).$
\item If $\gcd( |xZ(G)|,|yZ(G)| ) = 1$ and $xyZ(G) = yxZ(G)$ then
$x$ is equivalent to $y.$
\end{enumerate}
\end{lemma}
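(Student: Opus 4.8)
The plan is to prove the three parts in order, since (3) will build on the ideas in (1) and (2).

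For part (1), the containment $C_G(xy)\supseteq C_G(x)\cap C_G(y)$ is immediate, so the work is the reverse inclusion. Suppose $g$ centralises $xy$. Since $\gcd(|x|,|y|)=1$ and $x,y$ commute, the cyclic group $\langle xy\rangle$ equals $\langle x\rangle\times\langle y\rangle$; concretely, there are integers $a,b$ with $x=(xy)^a$ and $y=(xy)^b$ (choose $a\equiv 1\pmod{|x|}$, $a\equiv 0\pmod{|y|}$, and symmetrically for $b$, using CRT). Hence any element commuting with $xy$ also commutes with $x$ and with $y$, giving $C_G(xy)\subseteq C_G(x)\cap C_G(y)$. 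Therefore $C_G(xy)=C_G(x)\cap C_G(y)$, so $|C_G(xy)|$ divides both $|C_G(x)|$ and $|C_G(y)|$. By the reformulation of adjacency in terms of centraliser orders recorded just before the lemma, this yields edges $x\sim xy$ and $y\sim xy$, hence $x\sim xy\sim y$. (One should note $xy\notin Z(G)$: if it were central, then $x=(xy)^a$ and $y=(xy)^b$ would both be central, contradicting the hypothesis; so the vertex $|(xy)^G|$ genuinely lies in ${cs}(G)$.)

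For part (2), observe that $C_G(x)\subseteq C_G(x^m)$ always, so $|C_G(x)|$ divides $|C_G(x^m)|$, and since $x^m\notin Z(G)$ by hypothesis the element $x^m$ contributes a vertex to $\D(G)$; thus $x\sim x^m$ directly from the centraliser-order description of the edge set. (There is nothing to prove about whether the two class lengths are distinct: an edge is allowed between equal labels, and in any case the statement only asserts adjacency.)

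For part (3), pass to the quotient $\bar G=G/Z(G)$ and write $\bar x,\bar y$ for the images. The hypotheses say $\gcd(|\bar x|,|\bar y|)=1$ and $\bar x\bar y=\bar y\bar x$ in $\bar G$, so part (1) applied in $\bar G$ gives $\bar x\sim \bar x\bar y\sim\bar y$ in $\D(\bar G)$, and in particular $\bar x$ and $\bar y$ lie in the same connected component of $\D(\bar G)$. The remaining point is to transfer this back to $G$: for a non-central $g\in G$ one has $|C_G(g)|=|Z(G)|\cdot|C_{\bar G}(\bar g)|$ is not generally true, but $|C_{\bar G}(\bar g)|$ divides $|C_G(g)|/|Z(G)\cap\langle\text{relevant}\rangle|$ — more carefully, I would use that $Z(G)\le C_G(g)$ and that the natural map $C_G(g)\to C_{\bar G}(\bar g)$ has image of index dividing a fixed quantity, so divisibility of centraliser orders in $\bar G$ implies the corresponding labels in $G$ lie in one component, possibly through the intermediate vertex coming from a preimage of $\bar x\bar y$. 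The cleanest route is: a divisibility $|C_{\bar G}(\bar x\bar y)| \mid |C_{\bar G}(\bar x)|$ lifts to $|C_G(z)| \mid |C_G(x)|$ for a suitable preimage $z$ of $\bar x\bar y$ because $Z(G)$ contributes the same factor $|Z(G)|$ to both centraliser orders (as $Z(G)\le C_G(z)\cap C_G(x)$) — here I use that $\bar x\bar y=\overline{z}$ for some $z\in G$ and that the preimage can be chosen non-central. Chaining the two lifted divisibilities through the vertex $|z^G|$ shows $|x^G|$ and $|y^G|$ lie in the same connected component of $\D(G)$; and $|x^G|\ne|y^G|$ is forced because $\gcd(|\bar x|,|\bar y|)=1$ with both non-trivial makes $|\bar x^{\bar G}|\ne|\bar y^{\bar G}|$, hence the $G$-class lengths differ as well. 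This establishes that $x$ is equivalent to $y$ in the sense defined before the lemma.

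The main obstacle is the bookkeeping in part (3): one must check that choosing a non-central preimage $z$ of $\bar x\bar y$ is possible (it is, since if every preimage of $\bar x\bar y$ were central then $\bar x\bar y$ would be trivial, forcing $\bar x=\bar y^{-1}$ and contradicting coprimality of their orders unless both are trivial), and that the factor of $|Z(G)|$ really cancels cleanly so that centraliser-order divisibility in $\bar G$ transfers to $G$. Parts (1) and (2) are routine once the Chinese Remainder Theorem observation is in place.
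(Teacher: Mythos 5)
Parts (1) and (2) of your proposal are fine: the CRT argument gives a self-contained proof of (1), which the paper simply cites from Dolfi, and (2) is the same observation the paper calls obvious.

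The problem is in (3), exactly at the step you flag yourself: transferring divisibility of centraliser orders from $\overline{G}=G/Z(G)$ back to $G$. For a non-central $g\in G$ one only has $C_G(g)Z(G)/Z(G)\le C_{\overline{G}}(\overline{g})$, and the index of this containment depends on $g$; it is not ``a fixed quantity'', and even a uniform bound would not convert $|C_{\overline{G}}(\overline{x}\,\overline{y})| \mid |C_{\overline{G}}(\overline{x})|$ into $|C_G(z)| \mid |C_G(x)|$ for a preimage $z$ of $\overline{x}\,\overline{y}$. The remark that $Z(G)$ ``contributes the same factor $|Z(G)|$'' only gives $|C_G(g)| = |Z(G)|\cdot|C_G(g)Z(G)/Z(G)|$; it does not identify $C_G(g)Z(G)/Z(G)$ with $C_{\overline{G}}(\overline{g})$. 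Moreover, since you never show that $x$ and $y$ actually commute in $G$ (only modulo $Z(G)$), the CRT trick cannot be run in $G$: there is no reason that $x\in\langle xy\rangle$, hence no inclusion $C_G(xy)\le C_G(x)$, and the needed edge $x\sim xy$ in $\D(G)$ is simply not established. A secondary point: your claim that coprimality of $|\overline{x}|$ and $|\overline{y}|$ forces $|x^G|\ne|y^G|$ is unjustified and false in general (take $G=S_3\times H$ with $H$ extraspecial of order $27$ and exponent $3$: a non-central involution and a non-central element of order $3$ both have class length $3$), though nothing essential hinges on it.

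The paper closes this gap by staying inside $G$. Using (2), replace $x$ and $y$ by suitable powers $x^{m_x}$, $y^{m_y}$ (each in the same component as the original element) so that $\gcd(|x|,|y|)=1$ in $G$ itself. Writing $xy=yxz$ with $z\in Z(G)$, one gets $|x|=|x^y|=|xz|$ and $|y|=|y^x|=|yz^{-1}|$, from which $|z|$ divides both $|x|$ and $|y|$, hence $z=1$. So $x$ and $y$ genuinely commute in $G$, and (1) applies directly to give the path $x\sim xy\sim y$ in $\D(G)$, with no passage through $\D(\overline{G})$. If you want to keep your quotient-based outline you would have to prove this commutation statement anyway, so the detour through $\overline{G}$ buys nothing; I recommend adopting the reduction-plus-commutator argument.
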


\begin{proof}
Note that (1) is \cite[Lemma~$3$]{Dolfi}.
(2) is obvious. Now consider (3). By replacing $x$ and $y$
by $x^{m_x}$ and $y^{m_y}$  for some integers $m_x,m_y$ if necessary, we may
assume that $\gcd(|x|,|y|) = 1.$ Now suppose $z\in Z(G)$ such that
$xy=yxz.$ Then $|x|=|x^y| = |xz| = \lcm( |x|,|z|)$ and
$|y|=|y^x| = |yz^{-1}| = \lcm( |y|, |z|).$ Since $|z|$ divides
$\gcd(|x|,|y|)=1$ we have $z=1.$ Hence $xy=yx$ and the result follows from (1).
\end{proof}

\subsection{The groups we consider}\label{sec:ourgroups}
Let $q = p^f$ be a  power of an odd prime $p$ and $f$ a non-negative
integer. Let ${\bf G}$ be a connected semisimple (which implies reductive)
algebraic group of rank $\ell$ with $\ell\ge2$
defined over
the algebraic closure $\overline{\F}_q$ or $\overline{\F}_{q^2}$
of the finite field $\F_q$ or $\F_{q^2}$.
Let $F: {\bf G}\rightarrow {\bf G}$ be a
Frobenius morphism and let  $G = {\bf G}^F = \{g\in {\bf G} \mid F(g)
= g \}$  be a
finite group of Lie type.
Moreover, if ${\bf G}$  is a classical group of Lie type, we
assume that $\Omega \le G \le \Delta$, where $\Omega$ and $\Delta$
and  $\ell$ are as in
Table~\ref{tab:simple}.
For a
field $K$ and a positive integer $n$, for convenience we
denote both $\GL(n,K)$ and $\GU(n,K)$ by $\GL^\epsilon(n,K)$, where
$\epsilon=1$ in the former and $\epsilon=-1$ in the latter case.

Note that  $\SL_2(q) \cong  \Sp_2(q) \cong \SU_2(q)$  so that  we take
$\ell  \ge   3$  for  types  ${}^2A_\ell$   and  $C_\ell$.   Moreover,
$\Omega_3(q) \cong  \PSL_2(q)$ so we  also take  $\ell \ge 2$  in case
$B_\ell$. As $\Omega_4^-(q)  \cong \PSL_2(q^2)$, we take  $\ell \ge 3$
in  case ${}^2D_\ell$.   Moreover, $\Omega_4^+(q)  \cong \SL_2(q)\circ
\SL_2(q)$, where $\circ$ denotes the central product. We determine the
Divisibility Graph of this group directly.  In fact, it is easy to see
that in  this case the  Divisibility Graph is always  connected.  Thus
for the  remainder of  the proof we  may assume $\ell  \ge 3$  in case
$D_\ell$.

\begin{table}[t]
\begin{center}
\begin{tabular}{rcllll}
\quad Type\quad & $n$ & 
$\Omega$ & $\Delta$ & $\dim({\bf \Delta})$ & Rank \\
\toprule
$A_\ell$ &$\ell+1$& 
$\SL(n,q)$& $\GL(n,q)$ &
$n^2$& $\ell\geq 3$\\
${}^2A_\ell$ & $\ell+1$ &
$\SU(n,q)$& $\GU(n,q)$& $n^2$  &$\ell\geq 3$\\
$B_\ell$ & $2\ell+1$ & 
$\Omega(n,q)$&
$\GO(n,q)$&  $n(n-1)/2$ &$\ell\geq 2$\\
$C_\ell$ & $2\ell$ &  
$\Sp(n,q)$& $\mbox{GSp}(n,q)$& $n(n+1)/2$
 &$\ell\geq 3$\\
$D_\ell$ & $2\ell$ &
 $\Omega^{+}(n,q)$&
$\GO^{+}(n,q)$&  $n(n-1)/2$ &
$\ell\geq 2$\\
${}^2D_\ell$ & $2\ell$ & 
$\Omega^{-}(n,q)$&
$\GO^{-}(n,q)$&  $n(n-1)/2$ &
$\ell\geq 3$\\
\bottomrule
\end{tabular}\\
\medskip
\caption{Finite classical groups considered in Theorem~\ref{main}}\label{tab:simple}
\end{center}
\end{table}

For convenience we also record the dimension of the algebraic group
corresponding to $\Delta$
in the same table.

For  the exceptional  groups  when $p$  is  a bad  and  odd prime,  we
obtained the Divisibility graphs directly,  either using the Tables in
\cite{Der1983,Eno,FJ93,FJ94,Shoji} or  using an  explicit list  of the
generic centraliser orders in $E_7(q)$  as polynomials in $q$ computed
by Frank L\"ubeck.  Using L\"ubeck's description as polynomials in $q$
in {\sf GAP}  \cite{GAP}, we could first determine the  lengths of the
conjugacy  classes of  $E_7(q)$ as  polynomials in  $q$ together  with
their  multiplicities.   For  those conjugacy  classes  with  non-zero
multiplicities,  we  could  determine   the  divisibility  graph.   We
verified that  Theorem~\ref{main} holds  for all  such groups.  In the
case  of  the  adjoint group  $E_7(q)$  for  $q$  a  power of  3,  the
divisibility graph is connected even when $q=3.$

Therefore, from now on we assume that $p$ is odd and a good prime for
${\bf G}$, that is
(\cite[p. 28]{Carter})
\begin{itemize}
\item $p \not= 2$ when $G$ has type $A_\ell,{}^2A_\ell,
  ~B_\ell,~C_\ell,~D_\ell, {}^2D_\ell$,\\
\item $p \not\in \{2,3\}$ when $G$ has type $G_2, F_4, E_6, {}^2E_6, E_7$,\\
\item $p \not\in \{2,3,5\}$ when $G$ has type $E_8$.
\end{itemize}
The force of $p$ being a good prime is that for any $x\in {\bf G}$ all
unipotent elements in $C_{\bf G}(x)$ lie in $C_{\bf G}(x)^\circ.$

Let $F: {\bf G}\rightarrow {\bf G}$ be a
Frobenius morphism and let  $G = {\bf G}^F = \{g\in {\bf G} \mid F(g)
= g \}$  be a
finite group of Lie type, ${\bf T}_0$  a fixed  $F$-stable maximal torus and $W =  N_{\bf G}({\bf T}_0)/{\bf T}_0$ the Weyl group of ${\bf G}.$
Throughout the paper, let $\overline{G}$ denote $G/Z(G)$ and $\overline{g} =
gZ(G)$ for $g\in G$.

An element $g\in G$ is called \emph{regular}, if $C_{\bf G}(g)$ has
dimension $\ell$, where $\ell$ denotes the rank of ${\bf G}$, i.e.\ the
dimension of a maximal torus of ${\bf G}.$
More background on the finite groups of Lie type can be found in
\cite{Carter} and \cite{MalleTesterman}.

\section{Proof of the main theorem}

Our aim is to determine the connected components of the Divisibility
Graph $\D(G)$. It is well known that each element $g\in G$ has a
Jordan decomposition $g=us=su$ with $u$ unipotent and $s$
semisimple. If $g$ is non-central then either  $s$ is non-central and
hence $g\sim s$ or $u$ is non-trivial and $g\sim u.$ Thus we consider
non-central semisimple and unipotent elements.
In Section~\ref{sec:uni} we show that all unipotent elements in $G$ are
equivalent.
In Section~\ref{sec:semi} we show that every non-central and
non-regular semisimple element is equivalent to a
unipotent element in $G$.
In Section~\ref{sec:iso} we recall the notion of a $CC$-group
introduced by Williams and show that certain maximal tori in $G$ which are
also $CC$-groups yield isolated vertices of $\D(G)$.
Finally we show in Section~\ref{sec:regsemi} that all regular
semisimple elements which do not lie in such a  torus
are also equivalent to a unipotent element. Thus the main theorem follows.

\subsection{Unipotent elements}\label{sec:uni}

Let $G$ be a finite classical group of Lie type and let $u$ be a
unipotent element in $G$.
We determine the $q$-part of the order of the centralizer of $u$ in
$\Delta$ when $G$ is one of the groups in Table~\ref{tab:simple}. As $q$ does
not divide $|Z(G)|$ nor $[\Delta:\Omega]$, the $q$-part
of $|C_H(u)|$ is the same for all groups $H$ with $\Omega \le H
\le\Delta$  and for groups $H/Z(H).$

\begin{lemma}\label{lem:qpart}
Let ${\bf G} = \GL_n(\overline{\F}_q), Sp_n(\overline{\F}_q)$ or
$GO_n(\overline{\F}_q)$ and let $G={\bf G}^F$.
Let $u$ be a unipotent element in $G$ which corresponds in ${\bf G}$ to
a  Jordan decomposition $\oplus J_i^{r_i}$. In particular, in cases
$\Sp$ or $O$ the integer $r_i$ is even for $i$ odd or $i$ even, respectively.
Then $|C_{G}(u)|_q = q^{a_u}$, where $a_u$ is determined by
  the following formulas:
\begin{enumerate}
\item $G =\GL^\epsilon(n,q)$ then
  ${\displaystyle a_u = \sum_{i} \left( ir_i^2 -\frac{r_i(r_i+1)}{2} \right)
  + 2\sum_{i<j} ir_ir_j}.$
\item $G =\Sp(n,q)$ then
  ${\displaystyle a_u =
  \frac{1}{2}\sum_{i} (i-\frac12)r_i^2
  + \sum_{i<j} ir_ir_j +  \sum_{i\mbox{\ \tiny even}\atop r_i \mbox{\ \tiny odd}}\frac14}.$
\item $G =\GO^\epsilon(n,q)$ for $\epsilon \in \{\circ,+,-\},$ then \\
  ${\displaystyle a_u =
  \frac{1}{2}\sum_{i} (i-\frac12)r_i^2
  + \sum_{i<j} ir_ir_j    -\frac12 \sum_{i} r_i
+ \sum_{i, r_i\mbox{ \tiny odd}}\frac{1}{4}}.$
\end{enumerate}
\end{lemma}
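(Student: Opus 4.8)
The plan is to compute $|C_{\bf G}(u)|$ inside the algebraic group first, then descend to the finite group $G = {\bf G}^F$ and extract the $q$-part. For a unipotent element $u$ with Jordan type $\oplus J_i^{r_i}$ in ${\bf G} = \GL_n, \Sp_n$ or $\GO_n$ over $\overline{\F}_q$, the connected centraliser $C_{\bf G}(u)^\circ$ has a well-known structure: it is the semidirect product of a reductive part (a product of $\GL$, $\Sp$ or $\GO$ factors indexed by the $i$ with $r_i \neq 0$) with a unipotent radical $R_u$ whose dimension is given by an explicit combinatorial formula in the $r_i$ (see e.g.\ Carter or Liebeck--Seitz). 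The key point for us is that $p$ is a good prime, so by the remark in Section~\ref{sec:ourgroups} all unipotent elements of $C_{\bf G}(u)$ lie in $C_{\bf G}(u)^\circ$, and hence the $q$-part (equivalently, the $p$-part) of $|C_G(u)|$ is controlled entirely by the $F$-fixed points of the unipotent radical together with the $q$-part of the orders of the finite reductive factors.

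First I would write down $\dim R_u$ and the reductive part $C$ in each of the three cases. In type $\GL$ one has $\dim C_{\bf G}(u) = \sum_i ir_i^2 + 2\sum_{i<j} ir_ir_j$ with reductive part $\prod_i \GL_{r_i}$ of dimension $\sum_i r_i^2$, so $\dim R_u = \sum_i (ir_i^2 - r_i^2) + 2\sum_{i<j}ir_ir_j$; for $\Sp$ and $\GO$ the analogous formulas carry the extra $\frac12$-shifts coming from the alternating/symmetric form, with reductive part a product of symplectic groups on the $J_i^{r_i}$ with $i$ even (resp.\ odd) and orthogonal groups on the others. Next I would pass to $G = {\bf G}^F$: the $q$-part of $|R_u^F|$ is exactly $q^{\dim R_u}$ since $R_u$ is a connected unipotent group (so $|R_u^F| = q^{\dim R_u}$ outright), and the $q$-part of the order of each finite classical group $\GL^\epsilon_{r_i}(q)$, $\Sp_{r_i}(q)$, $\GO^\pm_{r_i}(q)$ is $q^{N_i}$ where $N_i$ is the number of positive roots of the corresponding type, i.e.\ $\binom{r_i}{2}$, $\frac{r_i^2}{4}$-ish, etc. Adding $\dim R_u$ to $\sum_i N_i$ and simplifying should reproduce the stated $a_u$ in each case; the little correction terms $\sum_{i\ \mathrm{even},\, r_i\ \mathrm{odd}} \frac14$ and $\sum_{i,\, r_i\ \mathrm{odd}} \frac14$ arise precisely because $|\Sp_{r_i}(q)|_q$ and $|\GO^\pm_{r_i}(q)|_q$ have exponent $\lfloor r_i^2/4\rfloor$ or similar, so the naive $\frac12(i-\frac12)r_i^2$ overcounts by a quarter when $r_i$ is odd.

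The main obstacle I anticipate is bookkeeping the boundary cases in the orthogonal and symplectic situations: one must be careful about which $i$ force $r_i$ even (odd $i$ in the symplectic case, even $i$ in the orthogonal case, as stated in the hypothesis), about the sign $\pm$ of the orthogonal factor on each $J_i^{r_i}$ block and whether that sign affects the $q$-part (it does not — $|\GO^+_{m}(q)|_q = |\GO^-_m(q)|_q$), and about the exact exponent of $q$ in $|\GO^\epsilon_m(q)|$ and $|\Sp_m(q)|$ as a function of the parity of $m$. A clean way to handle this uniformly is to note that for a classical group of rank $r$, the $q$-part of its order is $q^{(\text{number of positive roots})}$ and the number of positive roots of $B_r, C_r, D_r$ is $r^2, r^2, r(r-1)$ respectively, then translate $r = \lfloor r_i/2\rfloor$ or $\lceil r_i/2\rceil$ accordingly. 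Once the root-number dictionary is fixed, the remaining computation is a routine but careful algebraic simplification, matching terms $ir_i^2$, $ir_ir_j$, linear terms in $r_i$, and the $\frac14$-corrections against the claimed closed forms; I would organise it as a short lemma-internal computation for each of the three cases (1), (2), (3) in turn.
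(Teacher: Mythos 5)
Your proposal is correct and follows essentially the same route as the paper: decompose $C_{\bf G}(u)$ into its unipotent radical and a reductive complement (the paper cites Goodwin--Roehrle and Liebeck--Seitz for the $F$-stable Levi decomposition with $C_G(u)=UR$ and $|U|=q^{\dim{\bf U}}$), read off $\dim C_{\bf G}(u)$ and the reductive factors case by case, and then add $\dim{\bf U}$ to the $q$-exponents of the finite factors $\GL^\epsilon_{r_i}(q)$, $\Sp_{r_i}(q)$, $\GO^{\pm}_{r_i}(q)$, with the parity-dependent $\tfrac14$-corrections arising exactly as you predict. The remaining work is the same routine simplification the paper carries out.
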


\begin{proof}
We first note that it is well known that
 $C_{\bf G}(u)$ admits a Levi-decomposition, see for example
\cite[Prop.~3.2]{GoodwinRoehrle} or \cite[Theorem~3.1]{LS}. That is,
 $C_{\bf G}(u) = {\bf UR}$, where ${\bf U} = R_u(C_{\bf G}(u))$ is the
unipotent radical of $C_{\bf G}(u)$, the
group ${\bf R}$ is reductive
and ${\bf U}\cap {\bf R} = \{1\}.$
Moreover, ${\bf U}$ and ${\bf R}$ are $F$-stable,
$C_{G}(u) = UR$,   and  $|U| = q^{\dim {\bf U}},$
 see \cite[Prop.~3.2]{GoodwinRoehrle} or
 \cite[Theorem~7.1]{LS}.
Thus we need to
determine ${\dim\bf U}.$ Consequently,
$\dim {\bf U} = \dim C_{\bf G}(u) -\dim {\bf R}.$ We consider the different
cases of classical groups in turn and note that in each case
\cite[Theorem~3.1(iii)]{LS} determines $\dim C_{\bf G}(u)$ and
\cite[Theorem~3.1(iv),Theorem~7.1(ii)]{LS}  describe
${\bf R}$ and thus $ \dim {\bf R}$, and $R$.  It follows that
$|C_{G}(u)|_q = q^{\dim {\bf U}} |R|_q$.

{\bf Case GL}:\quad Let $G= \GL^\epsilon(n,q)$.
Then
$\dim C_{\GL^\epsilon(n,\overline{\F}_q)}(u) = \sum_{i} ir_i^2
  + 2\sum_{i<j} ir_ir_j$ and
${\bf R} = \prod \GL(r_i,\overline{\F}_q),$
thus $\dim {\bf R} = \sum_i
r_i^2$, and
$R = \prod \GL^\epsilon(r_i,\overline{\F}_q),$
 showing that
$\dim {\bf U} =  \sum_{i} (i-1)r_i^2 + 2\sum_{i<j} ir_ir_j.$
Thus $|C_{\GL(n,q)}(u)|_q = q^{a_u} = q^{\dim({\bf U}) }\prod_i
  |\GL(r_i,q)|_q.$ Thus
\begin{eqnarray*}
a_u
&=& \sum_{i} (i-1)r_i^2 + 2\sum_{i<j} ir_ir_j + \sum_i \frac{r_i(r_i-1)}{2}\\
&=&  \sum_{i} \left( ir_i^2 -\frac{r_i(r_i+1)}{2} \right)
  + 2\sum_{i<j} ir_ir_j.
\end{eqnarray*}

{\bf Case Sp}:\quad
Here
${\displaystyle \dim C_{\bf G}(u) = \frac{1}{2}\sum_{i} ir_i^2
  + \sum_{i<j} ir_ir_j +  \frac12\sum_{i \mbox{ \tiny odd}} r_i}$
and, moreover,
${\displaystyle {\bf R} =
\prod_{i \mbox{ \tiny odd}} \Sp(r_i,\overline{\F}_q)
\times
\prod_{i \mbox{ \tiny even}} GO(r_i,\overline{\F}_q)}.$
Thus, $\dim {\bf R} =
\frac12 \sum_{i \mbox{ \tiny odd}}
r_i(r_i+1) +\frac12 \sum_{i \mbox{ \tiny even}} r_i(r_i-1)$,
 showing that
\begin{eqnarray*}
\dim {\bf U} &=&  \dim C_{\bf G}(u) - \dim {\bf R} \\
&=&
\frac{1}{2}\sum_{i} ir_i^2
  + \sum_{i<j} ir_ir_j +  \frac12\sum_{i \mbox{ \tiny odd}} r_i -
\frac12 \sum_{i \mbox{ \tiny odd}}
r_i(r_i+1) -\frac12 \sum_{i \mbox{ \tiny even}} r_i(r_i-1)\\
&=&
\frac{1}{2}\sum_{i} ir_i^2
  + \sum_{i<j} ir_ir_j - \frac12 \sum_{i \mbox{ \tiny odd}}
r_i^2 -\frac12 \sum_{i \mbox{ \tiny even}} r_i(r_i-1)\\
&=&
\frac{1}{2}\sum_{i} (i-1)r_i^2
  + \sum_{i<j} ir_ir_j
   +\frac12 \sum_{i \mbox{ \tiny even}} r_i.
\end{eqnarray*}
It follows that
$q^{a_u} = q^{\dim({\bf U})} \prod_{i \mbox{ \tiny odd}}|\Sp(r_i,q)|_q
\prod_{i \mbox{ \tiny even}} |GO^{\epsilon_i}(r_i,q)|_q$ and
\begin{eqnarray*}
a_u 
&=&  \frac{1}{2}\sum_{i} (i-1)r_i^2
  + \sum_{i<j} ir_ir_j
 + \sum_{i}\frac{r_i^2}{4}
+\sum_{{i\mbox{ \tiny even}}\atop{ r_i\mbox{\tiny odd}}}\frac14. \\
\end{eqnarray*}

{\bf Case O}:\quad
Here
${\displaystyle \dim C_{\bf G}(u) = \frac{1}{2}\sum_{i} ir_i^2
  + \sum_{i<j} ir_ir_j -  \frac12\sum_{i \mbox{ \tiny odd}} r_i}$
and furthermore
${\displaystyle {\bf R} = \prod_{i \mbox{ \tiny odd}} GO(r_i,\overline{\F}_q)
\times
\prod_{i \mbox{ \tiny even}} \Sp(r_i,\overline{\F}_q)}.$
Thus $\dim {\bf R} =
 \sum_{i \mbox{ \tiny odd}}
\frac{r_i(r_i-1)}{2} + \sum_{i \mbox{ \tiny even}} \frac{r_i(r_i+1)}{2}$,
 showing that
\begin{eqnarray*}
\dim {\bf U} &=&  \dim C_{\bf G}(u) - \dim {\bf R} \\
&=&
\frac{1}{2}\sum_{i} ir_i^2
  + \sum_{i<j} ir_ir_j -  \frac12\sum_{i \mbox{ \tiny odd}} r_i -
\frac12 \sum_{i \mbox{ \tiny odd}}
r_i(r_i-1) -\frac12 \sum_{i \mbox{ \tiny even}} r_i(r_i+1)\\
&=&
\frac{1}{2}\sum_{i} ir_i^2
  + \sum_{i<j} ir_ir_j - \frac12 \sum_{i \mbox{ \tiny odd}}
r_i^2 -\frac12 \sum_{i \mbox{ \tiny even}} r_i(r_i+1)\\
&=&
\frac{1}{2}\sum_{i} (i-1)r_i^2
  + \sum_{i<j} ir_ir_j    -\frac12 \sum_{i \mbox{ \tiny even}} r_i.
\end{eqnarray*}
Therefore,
$q^{a_u} =q^{ \dim({\bf U})} \prod_{i \mbox{ \tiny odd}}|GO^{\epsilon_i}(r_i,q)|_q
\prod_{i \mbox{ \tiny even}} |\Sp(r_i,q)|_q$ and
\begin{eqnarray*}
a_u
&=&  \frac{1}{2}\sum_{i} (i-\frac12)r_i^2
  + \sum_{i<j} ir_ir_j    -\frac12 \sum_{i} r_i
+ \sum_{i, r_i\mbox{ \tiny odd}}\frac{1}{4}. \\
\end{eqnarray*}

\end{proof}

We now show that all unipotent elements in $G$ are equivalent.

\begin{lemma}\label{lem:uni}
Let $G$  be a  finite group  of Lie type  as  given in
Section~$\ref{sec:ourgroups}$.
Then all unipotent elements in $G$ are equivalent.
\end{lemma}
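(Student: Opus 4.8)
The plan is to produce one vertex of $\D(G)$ that is joined, through the unipotent part of the graph, to the class length of every unipotent element, and the natural candidate is a regular unipotent element. So fix a regular unipotent element $u_0\in G$. Such an element exists; it is non-central (the order of $Z(G)$ is prime to $p$, so $1$ is the only central unipotent); and when ${\bf G}$ is classical it is represented by an explicit unipotent matrix lying in $\Omega$, hence in every $H$ with $\Omega\le H\le\Delta$. I claim that $|C_G(u_0)|$ divides $|C_G(u)|$ for every unipotent $u\in G\setminus\{1\}$. Granting this, every such $u$ satisfies $u_0\sim u$ (or simply $|u_0^G|=|u^G|$), so the class lengths of all unipotent elements of $G$ lie in a single connected component of $\D(G)$, which is the assertion of the lemma.

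To prove the claim I bound the $p$-part and the $p'$-part of $|C_G(u)|$ below by the corresponding parts of $|C_G(u_0)|$. For the $p$-part, when ${\bf G}$ is classical Lemma~\ref{lem:qpart} reduces the question to the combinatorial inequality $a_u\ge a_{u_0}=\ell$, where $u$ corresponds to $\oplus J_i^{r_i}$. Introducing $n_i=\sum_{j\ge i}r_j$, the number of Jordan blocks of size at least $i$ (so $n_1\ge n_2\ge\cdots$ and $\sum_i n_i=n$), the formula of Lemma~\ref{lem:qpart} for $a_u$ can be rewritten as $\binom{n_1}{2}+\sum_{i\ge1}n_in_{i+1}$ in the linear case, and in closely related forms in the $\Sp$ and $\GO$ cases; from these the bound $a_u\ge\ell$ is an elementary estimate, with equality exactly when $u$ is regular. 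When ${\bf G}$ is exceptional and $p$ is good, the analogous bound $|C_G(u)|_p\ge q^\ell=|C_G(u_0)|_p$ holds because the regular class is the unique unipotent class whose centraliser has minimal $p$-part; this can be read off the known classification of unipotent classes and their centralisers, see for instance \cite{LS}. For the $p'$-part, use the Levi decomposition $C_{\bf G}(u)={\bf U}{\bf R}$ from the proof of Lemma~\ref{lem:qpart}: the reductive part ${\bf R}$ is a direct product of smaller classical groups indexed by the nonzero $r_i$, and already contains the small subgroup accounting for $|C_G(u_0)|_{p'}$ --- a factor $q-\epsilon$ coming from some $\GL^\epsilon(r_i,q)$ with $r_i\ge1$ in types $A_\ell$ and ${}^2A_\ell$, and an even-order factor from some $\Sp(r_i,q)$ or $\GO(r_i,q)$ with $r_i\ge1$ otherwise (and a divisor of a fixed small integer for exceptional ${\bf G}$). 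Passing from $\Delta$ to an arbitrary $G$ with $\Omega\le G\le\Delta$ changes only $p'$-parts, and a short computation with the determinant and spinor-norm maps on $C_\Delta(u)$, together with the elementary fact that $\gcd(ab,m)$ divides $\gcd(a,m)\gcd(b,m)$, shows the divisibility is preserved for all admissible $G$.

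The step I expect to be the genuine obstacle is this last one: ensuring, uniformly in $G$ with $\Omega\le G\le\Delta$, that $|C_G(u_0)|_{p'}$ divides $|C_G(u)|_{p'}$. The point is that $u_0$ has the smallest centraliser among nontrivial unipotents, so the estimate $a_u\ge\ell$ goes the right way for the $p$-part; but on the $p'$-side there is no slack, and one must keep track of the correction factors coming from $[\Delta:G]$ and from $|Z(G)|$. By contrast, taking a minimal long-root unipotent as base vertex does not work, because the $p'$-parts of centralisers of unipotent elements need not all divide the $p'$-part of the centraliser of a root element.
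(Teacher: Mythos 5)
Your strategy is the same as the paper's in outline: take a regular unipotent element $u_0$, argue that $|C_G(u_0)|$ divides $|C_G(u)|$ for every non-trivial unipotent $u$, and obtain the $q$-part from Lemma~\ref{lem:qpart} through the inequality $a_u\ge\ell$ (your rewriting in terms of $n_i=\sum_{j\ge i}r_j$ is a correct variant for $\GL^\epsilon$, though you only gesture at the $\Sp$ and $\GO$ cases, which the paper checks in detail, and at the exceptional types, which the paper settles via the tables in \cite{LS}). The genuine gap is exactly where you place it: the $p'$-part. Your identification of $|C_G(u_0)|_{p'}$ with a factor $q-\epsilon$ (resp.\ an even-order factor inside some $\Sp(r_i,q)$ or $\GO(r_i,q)$) is only correct for $G=\Delta$; for general $G$ with $\Omega\le G\le\Delta$ the $p'$-part of $C_G(u_0)$ is $|Z(G)|$ --- for instance $\gcd(n,q-1)$ when $G=\SL(n,q)$ --- and the proposed passage from $\Delta$ to $G$ is not a ``short computation''. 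The index $[C_\Delta(u):C_G(u)]$ varies with the Jordan type (e.g.\ the image of the determinant on $C_{\GL(n,q)}(u)$ is the subgroup of $d$-th powers with $d=\gcd\{i\mid r_i>0\}$), so your reduction would require controlling these images, and the analogous spinor-norm analysis, class by class; the inequality $\gcd(ab,m)\mid\gcd(a,m)\gcd(b,m)$ does not by itself yield the needed divisibility, and for the exceptional groups nothing is offered beyond ``a divisor of a fixed small integer''. As written, the claim $|C_G(u_0)|_{p'}\mid |C_G(u)|_{p'}$ for all admissible $G$ is asserted, not proved.

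The paper closes precisely this gap with a structural fact you did not use: since $p$ is good, $C_{\bf G}(u_0)=Z({\bf G})\,C_{\bf U}(u_0)$ with $C_{\bf U}(u_0)$ connected \cite[III.1.14]{SpringerSteinberg}, so every semisimple element of $C_G(u_0)$ is central and $|C_G(u_0)|=|Z(G)|\,q^{\ell}$. Once one knows $q^{\ell}$ divides $|C_G(u)|_q$, the divisibility $|C_G(u_0)|\mid|C_G(u)|$ is immediate because $Z(G)\le C_G(u)$ and $\gcd(|Z(G)|,q)=1$; this works uniformly for all $\Omega\le G\le\Delta$, for the central quotients, and for the exceptional groups, and it removes the $\Delta$-to-$G$ reduction entirely. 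If you replace your $p'$-argument by this observation and complete the $a_u\ge\ell$ estimates for $\Sp$, $\GO$ and the exceptional types, your proof becomes the paper's.
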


\begin{proof}
Let  $u$  be a  regular  unipotent element  in  $G$,  which exists  by
\cite[Prop.5.1.7]{Carter}.
Then $u$ lies in a maximal
connected $F$-stable unipotent subgroup ${\bf U}$ of ${\bf G}$ and  by
\cite[III.1.14]{SpringerSteinberg}
$C_{\bf G}(u) = Z({\bf G}).C_{\bf  U}(u)$ and, since
$p$ is good, $C_{\bf U}(u)$ is connected.
Moreover, $|C_{G}(u)|_q  = q^{\ell}$.

Note that it is enough to show this for one of the groups with
given Dynkin Diagram as their $q$-parts of the centralisers are the
same. For the classical groups we restrict our attention to the
groups considered in Lemma~\ref{lem:qpart}.

Let $v$ be unipotent with  Jordan decomposition $\oplus_i
J_i^{r_i}$.  By Lemma~\ref{lem:qpart} $|C_{G}(v)|_q  = q^{a_v}.$
If $v$ is not
regular, we show   that $q^\ell$ divides $|C_{G}(v)|_q. $ From this it follows
 that $v$ is equivalent to $u$, as
all semisimple elements in $C_G(u)$ lie in $Z(G)$ and $Z(G) \le
C_G(v).$
Thus we need to show that  $a_v - \ell \ge 0.$  We
note that $n = \sum_{i} ir_i. $

Suppose first that $G = \GL^\epsilon(n,q)$. By Lemma~\ref{lem:qpart}
$|C_{\GL(n,q)}(v)|_q  = q^{a_v}$ with
$a_v= \sum_{i} \left( ir_i^2 -r_i(r_i+1)/2 \right)
  + 2\sum_{i<j} ir_ir_j$.
As $ir_ir_j \ge r_ir_j,$ we note that \[a_v -\ell \ge
\sum_{i} r_i\left( ir_i -i -\frac{(r_i+1)}{2} + \sum_{j\not=i} r_j \right).\]
We show that for
any $i$  the $i$-th summand is non-negative. Clearly this is the case
when $r_i=0.$  So suppose now $r_i > 0.$ Then it suffices to show
$(i-\frac12)r_i -(i+\frac12) + \sum_{j\not=i} r_j
 \ge 0$ if
$ r_i \ge \frac{i+\frac12}{i-\frac12} = 1 + \frac1{i-\frac12}$.
For $i = 1$ this is the case for
$r_i \ge 3$ and for $i > 1$ this is the case for  $r_i \ge 2.$
Hence for any such pair $(i,r_i)$ each summand is non-negative.

Now consider  the case $i=1$ and $r_i=1,2$ or
 $i > 1$ and $r_i=1$. As $v$ is not regular, $i < n.$
And since  $n \ge 3$ in either case, there is at least one $j$ with
$j\not=i$ such that  $r_j \not=0$. Then
$(i-\frac12)r_i -(i+\frac12) + \sum_{j\not=i} r_j
 \ge  (i-\frac12)r_i - i+\frac12.$ For $r_i=1$ this expression is $0$ and
for $r_i=2$ we have $i=1$ and thus
$(i-\frac12)r_i - i+\frac12 = 1/2 \ge 0.$

Suppose now that $G = \Sp(n,q)$ with $n$ even.
As $ir_ir_j \ge r_ir_j,$  and  $\ell = \frac12 n =
 \frac12  \sum_{i} ir_i$, we note that by
 Lemma~\ref{lem:qpart}(b)
$|C_{\Sp(n,q)}(v)|_q  = q^{a_v}$ with
 $a_v -\ell \ge
 \frac{1}{2}\sum_{i} r_i\left( (i-\frac12)r_i +\sum_{j\not=i}r_j
 -i\right).$

It suffices to show that for
any $i$  the $i$-th summand is non-negative. This is certainly the
case when $r_i=0.$ So suppose $r_i>0.$
This is the case when  $r_i\ge 2.$
Now suppose $r_i=1$.
 Then $i$ has to be even and hence $i\ge 2.$
As  $v$ is not regular and
$v\not=1$, there is a $j$ with $r_j \not=0$, whence
$\sum_{j\not=i}r_j \ge 1$,  and now
$(i-\frac12)r_i + 1 -i \ge 0.$

Suppose now that $G = \GO^\varepsilon(n,q)$ with $\epsilon = \circ$, if $n$ odd and
$\epsilon = \pm$  if $n$ even.
The Lie rank $\ell$ of $G$ is $\frac12 (n-1)$ when $n$ is
odd and $\frac12 n$ when $n$ is even.  In either case,
$ \frac12  \sum_{i} ir_i \ge \ell.$

As $ir_ir_j \ge r_ir_j,$ we note that
for  $|C_{\GO^\varepsilon(n,q)}(v)|_q  = q^{a_v}$  we have
by Lemma~\ref{lem:qpart}(3)
\begin{eqnarray*}
a_v - \ell &\ge&
\frac{1}{2}\sum_{i} r_i\left( (i-\frac12)r_i -1 -i+\sum_{j\not=i}r_j
\right)
+ \frac{1}{2}\sum_{i, r_i\mbox{ \tiny odd}}\frac{1}{2}
\end{eqnarray*}
It suffices to show that for
any $i$  the $i$-th summand is non-negative. This is certainly the
case when $r_i=0.$ So suppose $r_i>0.$
If $r_i\ge 2$, then as $\sum kr_k \ge 4$, either $i\ge 2$
or there is a $j$ with $r_j \not=0$, whence
$\sum_{j\not=i}r_j \ge 1$.  Thus the $i$-th summand is non-negative as
 either
$ (i-\frac12)2 -1 -i = i - 2 \ge 0$
or
$ (i-\frac12)2 -1 -i+1 = i-1 \ge 0.$
Now suppose $r_i=1$. Then $i$ has to be odd and,
as  $v$ is not regular and
$v\not=1$, there is a $j$ with $r_j \not=0$, whence
$\sum_{j\not=i}r_j \ge 1$.
Thus the $i$-th summand of $a_v-\ell$ is  at least
$\frac{1}{2} (i-\frac12 -1 -i+ 1)  + \frac{1}{4}   = 0.$

We now consider the remaining cases. Note that it suffices to consider
the simple group.

{\bf $G_2(q)$}: \quad Since $p$ is good, $\gcd(6,p) = 1$ and $G$
contains  a unipotent element $u$ whose centraliser has order $q^2$
and $q^2$ divides the order of the centraliser of every other
non-trivial unipotent
element in $G$ by \cite[Table~22.2.6]{LS}.

{\bf $F_4(q)$}: \quad Since $p$ is good, $\gcd(12,p^2) = 1$ and $G$
contains  a unipotent element $u$ whose centraliser has order $q^4$
and $q^4$ divides the order of the centraliser of every other
non-trivial unipotent
element in $G$ by \cite[Table~22.2.4]{LS}.

{\bf $E_6(q), {}^2E_6(q)$}: \quad Since $p$ is good, $\gcd(6,p) = 1$ and $G$
contains  a unipotent element $u$ whose centraliser has order $q^6$
and $q^6$ divides the order of the centraliser of every other
non-trivial unipotent
element in $G$ by \cite[Table~22.2.3]{LS}.

{\bf $E_7(q)$}: \quad Since $p$ is good, $\gcd(12,p^2) = 1$ and $G$
contains  a unipotent element $u$ whose centraliser has order $q^7$
and $q^7$  divides the order of the centraliser of every other
non-trivial unipotent element in $G$ by \cite[Table~22.2.2]{LS}.

{\bf $E_8(q)$}: \quad Since $p$ is good, $\gcd(60,p^2) = 1$ and $G$
contains  a unipotent element $u$ whose centraliser has order $q^8$
and $q^8$ divides the order of the centraliser of every other
non-trivial unipotent element in $G$ by \cite[Table~22.2.1]{LS}.

\end{proof}

 \subsection{Non-regular semisimple elements}\label{sec:semi}

 \begin{lemma}\label{lem:semi-non-reg}
Let $G$ be as in Section~$\ref{sec:ourgroups}$.
 Then every non-central, non-regular semisimple element
 is equivalent to a unipotent element of $G$.
 \end{lemma}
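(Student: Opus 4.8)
The plan is to reduce the statement to Lemma~\ref{lem:sim}(1) together with Lemma~\ref{lem:uni}. Concretely, I would show that a non-central, non-regular semisimple element $s\in G$ always commutes with some \emph{nontrivial} unipotent element $u$ of $G$. Granting this, $s$ is a $p'$-element and $u$ is a $p$-element, so $\gcd(|s|,|u|)=1$; moreover $s$ is non-central by hypothesis and $u$ is non-central because every nontrivial unipotent element of $G$ is (for the groups of Section~\ref{sec:ourgroups} one has $p\nmid|Z(G)|$, so $Z(G)$ contains no element of order $p$). Hence Lemma~\ref{lem:sim}(1) applies and yields $s\sim u$. By Lemma~\ref{lem:uni} the vertex $|u^G|$ lies in the connected component of $\D(G)$ that contains the lengths of all unipotent conjugacy classes, and therefore so does $|s^G|$; that is, $s$ is equivalent to a unipotent element of $G$.

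To construct $u$, I would work inside $C:=C_{\bf G}(s)^\circ$. Since $s$ is semisimple and ${\bf G}$ is connected reductive, $C$ is connected reductive, and it contains a maximal torus of ${\bf G}$ through $s$, so it has rank $\ell$; since $F(s)=s$, the subgroup $C_{\bf G}(s)$ and hence its identity component $C$ is $F$-stable. The hypothesis that $s$ is non-regular means precisely that $\dim C_{\bf G}(s)=\dim C>\ell$, so $C$ is strictly larger than a maximal torus and therefore is not a torus. By the Lang--Steinberg theorem the connected group $C$ has an $F$-stable Borel subgroup $B$, and since $C$ is reductive but not a torus its unipotent radical $R_u(B)$ is connected, $F$-stable, and of positive dimension, so $R_u(B)^F\neq 1$. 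Any $u\in R_u(B)^F\setminus\{1\}$ is then a nontrivial unipotent element lying in $C^F\le C_{\bf G}(s)^F=C_G(s)$, i.e.\ a nontrivial unipotent element of $G$ commuting with $s$, as required. Combined with the first paragraph, this finishes the proof.

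This argument is short: the only substantive input is the existence of a nontrivial unipotent element in $C_G(s)$, and this follows from the two facts that $C_{\bf G}(s)^\circ$ is connected reductive of rank $\ell$ and, by non-regularity, bigger than a maximal torus. The step I expect to need the most care in the write-up is precisely this one — recording the $F$-stability of $C_{\bf G}(s)^\circ$ (immediate from $F(s)=s$), the non-triviality of $R_u(B)^F$ (immediate from Lang--Steinberg applied to a connected group), and why a nontrivial unipotent element cannot lie in $Z(G)$. It is worth emphasising that non-regularity is used in an essential way: for a regular semisimple $s$ the group $C_{\bf G}(s)^\circ$ is a maximal torus, and since $p$ is good $C_{\bf G}(s)$ then contains no nontrivial unipotent element at all, so the present approach necessarily breaks down — which is exactly why the regular semisimple elements are treated separately in Section~\ref{sec:regsemi}.
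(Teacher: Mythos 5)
Your argument is correct, and its skeleton coincides with the paper's: produce a non-trivial rational unipotent element inside $C_G(s)$ and then conclude by coprimality and commutation via Lemma~\ref{lem:sim}(1) (a step the paper leaves implicit after "In particular $C_G(s)$ contains a unipotent element"; you spell it out, including why such an element is non-central). Where you genuinely differ is in how that unipotent element is produced. The paper uses the root-subgroup description $C_{\bf G}(s)^\circ=\langle {\bf T},X_\alpha\mid \alpha(s)=1\rangle$ from \cite[Theorem~3.5.3(i)]{Carter}, translates non-regularity into the existence of a root $\alpha$ with $\alpha(s)=1$ via \cite[Proposition~14.6]{DigneMichel}, and then invokes \cite[Prop.~23.7, Prop.~23.9]{MalleTesterman} to get a non-trivial unipotent subgroup $(X_A)^F\le C_G(s)$ from the $F$-orbit of $X_\alpha$. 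You instead work directly from the paper's dimension definition of regularity: $C_{\bf G}(s)^\circ$ is connected reductive, $F$-stable, of rank $\ell$ but of dimension $>\ell$, hence not a torus, so the unipotent radical of an $F$-stable Borel subgroup of it is positive-dimensional and has non-trivial $F$-fixed points. This avoids the explicit root-system citations at the cost of the Borel-subgroup existence theorem. The one imprecision is your claim that $R_u(B)^F\neq 1$ is "immediate from Lang--Steinberg": what is actually needed is the standard count $|{\bf U}^F|=q^{\dim {\bf U}}$ for a connected $F$-stable unipotent group ${\bf U}$ (the very fact the paper quotes in the proof of Lemma~\ref{lem:qpart}), applied with $\dim R_u(B)>0$; with that reference supplied your proof is complete and of essentially the same length as the paper's.
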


 \begin{proof}
 Let $s$ be a non-central and  non-regular semisimple element in $G$. Then
 $s$ lies in  an $F$-stable torus ${\bf T}$ of ${\bf G}$  and
 $C_{\bf G}(s)^\circ = \langle {\bf T}, X_\alpha \mid \alpha(s) = 1 \rangle,$
 where the  $X_\alpha$ denote the root subgroups with respect to the
 torus ${\bf T}$ by \cite[Theorem 3.5.3(i)]{Carter}.
 Now $C_{\bf G}(s)^\circ$ is a connected reductive and $F$-stable
 \cite[p. 28]{Carter} and $C_{\bf G}(s)^F = C_G(s)$ (see \cite[p. 1]{CarterPLMS}).
 Moreover,
 since $s$ is not regular,
 there exists a root $\alpha$ with respect to
 ${\bf T}$ for which $\alpha(s) = 1$ by \cite[Proposition
   14.6]{DigneMichel}.
 Therefore the equivalence class $A$ of $X_\alpha$ determines a
 non-trivial unipotent  subgroup $(X_A)^F$ of $G$  by \cite[Prop.~23.7,
   Prop.~23.9]{MalleTesterman}. In particular
  $C_{{G}}(s)$ contains a unipotent element.
 \end{proof}

Our next aim  is to show that if $xZ(G)$ is  an involution in $G/Z(G)$
then  $x$  is equivalent  to  a unipotent  element  in  $G$.  For  the
exceptional groups the proof relies  on the knowledge of their classes
of involutions.

\begin{lemma}\label{lem:inv}
Let $G$  be a  finite group  of Lie type  of rank  $\ell$ as  given in
Section~$\ref{sec:ourgroups}$.   If   $\overline{x}$  is  an   involution  in
$\overline{G}$ then $x$ is equivalent to a unipotent element in $G$.
\end{lemma}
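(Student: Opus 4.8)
My plan is to reduce to the centralizer structure of the involution $x$ in the algebraic group $\mathbf{G}$. Since $\overline{x} = xZ(G)$ is an involution in $\overline{G}$, the element $x$ is semisimple (its order is a power of $2$ times something central, hence coprime to $p$), and $x \notin Z(G)$. By Lemma \ref{lem:semi-non-reg} it suffices to show that $x$ is non-regular, i.e.\ that $\dim C_{\mathbf G}(x) > \ell$; equivalently, by \cite[Theorem 3.5.3(i)]{Carter}, that there is a root $\alpha$ (with respect to an $F$-stable maximal torus $\mathbf T$ containing $x$) with $\alpha(x) = 1$. Indeed, if this holds, then the argument in the proof of Lemma \ref{lem:semi-non-reg} shows $C_G(x)$ contains a non-trivial unipotent element, and then Lemma \ref{lem:uni} gives that $x$ is equivalent to \emph{the} unipotent class, so we are done.

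So the heart of the matter is: an involution $\overline{x}$ in $\overline{G}$, lifted to $x \in G$, is never regular. First I would treat the classical groups directly using the explicit description of involution centralizers. Writing $\overline{G}$ acting on the natural module $V$ of dimension $n$, an involution $\overline{x}$ corresponds (up to scalars) to an eigenspace decomposition $V = V_+ \perp V_-$ with $x$ acting as $\pm 1$; the connected centralizer $C_{\mathbf G}(x)^\circ$ is then (roughly) a product $\GL(V_+)\times\GL(V_-)$, or $\Sp(V_+)\times\Sp(V_-)$, or $\GO(V_+)\times\GO(V_-)$, whose dimension one computes to be a sum of two terms of the shape $\binom{k}{2}$ or $\binom{k}{2}+\lfloor k/2\rfloor$ or $k^2$ with $k \in \{\dim V_+,\dim V_-\}$, $\dim V_+ + \dim V_- = n$. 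A short convexity/case check (using $\ell \ge 2$ or $\ell \ge 3$ as recorded in Table \ref{tab:simple}, which excludes exactly the small cases like $\PSL_2$ where an involution \emph{can} be regular) shows this dimension strictly exceeds the rank $\ell$. One must be slightly careful in the twisted cases ${}^2A_\ell,{}^2D_\ell$ and with the effect of $Z(G)$ — i.e.\ $x$ is only determined up to a central scalar and $\overline x^2 = 1$ means $x^2 \in Z(G)$ — but the condition "$\overline x$ an involution" still forces a two-space eigen-decomposition of $V\otimes\overline{\F}_q$ with both pieces nonzero, and the dimension count is unchanged.

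For the exceptional groups $G_2, F_4, E_6, {}^2E_6, E_7, E_8$, I would simply invoke the classification of conjugacy classes of involutions and their centralizers (for instance via \cite{Der1983} and the tables referenced in the excerpt), from which one reads off that every involution centralizer in these groups is a reductive subgroup of semisimple rank equal to $\ell$ but of \emph{dimension strictly larger than} $\ell$ (the semisimple involution centralizers are things like $A_1 A_5$, $D_8$, $A_1 D_6$, $A_7$, $D_5 T_1$, $A_1 A_1 A_1 A_1$, etc., all of dimension $> \ell$) — hence no involution is regular, and again Lemma \ref{lem:semi-non-reg} and Lemma \ref{lem:uni} finish the argument. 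The main obstacle I anticipate is purely bookkeeping: handling the interaction between $Z(G)$ and the eigenspace decomposition uniformly across $\Omega \le G \le \Delta$ in the classical cases (so that the statement about $\overline x$ rather than $x$ is correctly translated), and making sure the small-rank exclusions in Table \ref{tab:simple} are exactly the ones needed so that the dimension inequality $\dim C_{\mathbf G}(x) > \ell$ is strict in every remaining case.
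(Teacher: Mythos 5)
Your proposal is correct and follows essentially the same route as the paper: reduce to the observation that a lift of an involution (possibly an element of order four with central square) is a non-central, non-regular semisimple element and invoke Lemma~\ref{lem:semi-non-reg}, with the exceptional groups handled by appealing to the known involution-centralizer data. The only differences are in the verification details: the paper certifies non-regularity in the classical cases via the minimal-polynomial-equals-characteristic-polynomial criterion rather than your eigenspace/dimension count (which is also fine, though when $x^2$ is a non-trivial central element the centralizer becomes a $\GL$-type subgroup rather than a product, so the dimension changes slightly while the inequality $\dim C_{\bf G}(x)>\ell$ still holds), and for the exceptional groups the paper uses divisibility of the involution-centralizer order by $q$ together with Lemma~\ref{lem:sim} instead of non-regularity.
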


\begin{proof}
Suppose  first  that $G$  is  a  group  of type  $A_\ell,  {}^2A_\ell,
B_\ell,$  $C_\ell$.  As  $q$  odd, an element of order a power of two
is  semisimple.  Note also that a non-central semisimple element $x \in
G$ is regular
if and only if it lies in a unique maximal torus of $G$. This is the
case if and only if $xZ(G)$ is regular in $G/Z(G)$.
A  semisimple element in a classical group  $H$  in natural
characteristic $p$ is
regular in types $A_\ell, {}^2A_\ell, B_\ell, C_\ell$ if and only if
its minimal polynomial and its characteristic polynomial are
equal. In case $D_\ell$ a stronger condition holds,
see \cite[Theorem~3.2.1]{FNP}.
For all values of $\ell$ we consider (see Section~\ref{sec:ourgroups})
there are no regular
involutions in classical groups. Moreover, there are also no regular
elements of order four
whose square is a central involution. Thus in $H$ or $H/Z(H)$
every involution is equivalent to a unipotent element by
Lemma~\ref{lem:semi-non-reg}.

Now let $\overline{G}$ be one of the exceptional or twisted simple
groups, namely $G_2(q),$ $F_4(q),$ $E_6(q),E_7(q),$
$E_8(q),{}^2E_6(q),{}^3D_4(q)$ where $q$ is a power of an odd prime.
Character tables and conjugacy classes  of these finite simple groups of Lie
type  have been  studied  extensively, see  for example
\cite{Chang,Der1983,DM,ModularAtlas,Mizuno,Shoji}
and it is known that the order of a centraliser of an involution in
these groups is divisible by $q.$
So by Lemma \ref{lem:sim}, each involution is equivalent to
some unipotent element.
\end{proof}

\begin{lemma}\label{lem:evencent}
Let $G$ be as in Section~$\ref{sec:ourgroups}$ and
let $s$ be a  semisimple element  of $G$.
If $|C_{\overline{G}}(\overline{s})|$ is even, $s$ is equivalent to a
unipotent element.
\end{lemma}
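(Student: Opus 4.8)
The plan is to extract from $\overline{s}$ an involution of $\overline{G}$ that is joined to $s$ in $\D(G)$, and then invoke Lemma~\ref{lem:inv}. Throughout I assume $s \notin Z(G)$, since otherwise $s$ is not a vertex of $\D(G)$; in particular $\overline{s} \neq 1$. I would split the argument according to the parity of the order $|\overline{s}| = |sZ(G)|$.

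First I would treat the case where $|\overline{s}|$ is even. If $|\overline{s}| = 2$, then $\overline{s}$ is itself an involution of $\overline{G}$ and Lemma~\ref{lem:inv} applies directly to $s$. Otherwise $m := |\overline{s}|/2 \geq 2$, the power $\overline{s}^m$ is an involution of $\overline{G}$, and $s^m \notin Z(G)$ since $\overline{s}^m \neq 1$; so Lemma~\ref{lem:sim}(2) gives $s \sim s^m$, and Lemma~\ref{lem:inv} then shows $s^m$, and hence $s$, is equivalent to a unipotent element. (The hypothesis on $|C_{\overline{G}}(\overline{s})|$ is not used in this case; it holds automatically, as $\overline{s} \in C_{\overline{G}}(\overline{s})$.)

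For odd $|\overline{s}|$ I would use the hypothesis: since $|C_{\overline{G}}(\overline{s})|$ is even, Cauchy's theorem yields an involution $\overline{t} \in C_{\overline{G}}(\overline{s})$, and any preimage $t \in G$ is non-central because $\overline{t} \neq 1$. Then $|tZ(G)| = 2$ is coprime to $|sZ(G)|$ and $stZ(G) = tsZ(G)$ because $\overline{t}$ centralises $\overline{s}$, so Lemma~\ref{lem:sim}(3) shows $s$ is equivalent to $t$; and $t$, being the preimage of an involution of $\overline{G}$, is equivalent to a unipotent element by Lemma~\ref{lem:inv}. Chaining these, $s$ is equivalent to a unipotent element.

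I do not expect a genuine obstacle here: the substantive work has already been carried out in Lemma~\ref{lem:inv}. The only points needing care are the small-order exception $|\overline{s}| = 2$ (so that Lemma~\ref{lem:sim}(2) is applied with an exponent different from $1$) and the consistent bookkeeping modulo $Z(G)$ — working with $|xZ(G)|$ rather than $|x|$ when appealing to Lemma~\ref{lem:sim}(3), together with the routine check that each power of $s$ that occurs lies outside $Z(G)$.
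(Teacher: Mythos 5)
Your proof is correct and follows essentially the same route as the paper: extract an involution of $\overline{G}$ commuting with $\overline{s}$ (a power of $s$ when $|\overline{s}|$ is even, an element of $C_{\overline{G}}(\overline{s})$ via Cauchy when $|\overline{s}|$ is odd), connect it to $s$ using Lemma~\ref{lem:sim}, and conclude with Lemma~\ref{lem:inv}. The only cosmetic difference is that the paper first passes to a power of $\overline{s}$ of odd prime order before applying Lemma~\ref{lem:sim}(1) and (3), whereas you apply Lemma~\ref{lem:sim}(3) directly; both are fine.
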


\begin{proof}
As $|C_{\overline{G}}(\overline{s})|$ is even,  there
is an involution  $\overline{x}\in  C_{\overline{G}}(\overline{s}).$

If $|\overline{s}|$ is odd, then some power of $\overline{s}^b$ has
odd prime order
and thus $\overline{s}\sim \overline{s}^b\sim \overline{x}$ as $\overline{x}\overline{s} =
\overline{s}\overline{x}$ and $\gcd(|\overline{s}|,|\overline{x}|) = 1$ by Lemma~\ref{lem:sim}(1).
By Lemma~\ref{lem:sim}(3) $s^b$  and thus also $s$ is equivalent to
$x$.
If $|\overline{s}|$ is even then we may  choose $x$ to be the power
$s^b$ for which $\overline{s}^b$ is an involution in $\overline{T}_0.$
Clearly $s$ is equivalent to $x=s^b$.
By Lemma~\ref{lem:inv} $x$ is equivalent in $G$ to a unipotent element
and thus so is $s$.
\end{proof}

\subsection{Isolated vertices}\label{sec:iso}

Williams  \cite[Lemma~5]{Williams} calls  a subgroup  $T$ of  a finite
group  $G$  a  \emph{CC-group}  if   $C_G(x)  \le  T$  for  all  $x\in
T\backslash  Z(G).$  If  $G$  is  a  group
we denote by $\overline{G}$ the group
$G/Z(G)$, for $H\le G$ we let $\overline{H}=HZ(G)/Z(G)$
 and for $g\in G$ we denote $gZ(G)$ by $\overline{g}.$
Now suppose $G$ is a finite group  of  Lie  type
$\overline{T}$    is    a    maximal    torus    of
$\overline{G}$      and     a     $CC$-group      such     that
$\gcd(|\overline{T}|,|Z(G)|)  = 1$. Williams  proved that  the set  $\pi$ of
prime divisors of $|\overline{T}|$  forms a connected component of the
Prime Graph of  $G$. Some general properties of  $CC$-groups are given
in   \cite[Proposition  1.14]{Babaietal},   where   they  are   called
\emph{sharp} subgroups. In particular,
a torus which
is a $CC$-group is a Hall $\pi$-subgroup of $G$.

Here we prove that if $G$ is a finite group of Lie type and
$T$ is a maximal torus such that $\overline{T}$ has odd order and
is a  $CC$-group in
$\overline{G}$, then $|s^G|$ for an $s\in T\backslash Z(G)$
forms an isolated vertex of $\D(G).$
Let $T^G=\cup_{h\in G}T^h$.

If the Prime Graph of $G$ has more than one component, let $\pi_1$
denote the component containing the prime 2. It follows from
\cite[p.~487]{Williams}
the number of components of the Prime Graph of
$G$ is at most that of the Prime Graph of the simple factor
corresponding
to $G$. In the following two lemmas we make use of a result of Williams
\cite[Lemma~5]{Williams} in which he showed in particular that for a
(not necessarily simple)
group $G$ as in   Section~$\ref{sec:ourgroups}$ and a maximal torus
$T$ in $G$ the primes dividing $|T|$ form a complete component of the
Prime Graph of $G$ not containing the prime 2 if and only if
$|\overline{T}|$ is odd, coprime to $|Z(G)|$ and $\overline{T}$
is a $CC$-group.

\begin{lemma}\label{lem:isotori}
Let $G$ be as in Section~$\ref{sec:ourgroups}$.
Let $T$ be a maximal torus
in $G$  for which $\overline{T}$ has odd order coprime to $|Z(G)|$ and
is a $CC$-group.
Then the non-central elements $g$ of $T^G$ form the isolated vertex
$|g^G|=|G|/|T Z(G)|$ in $\D(G).$
\end{lemma}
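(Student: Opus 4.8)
The plan is to show that if $g \in T^G$ is non-central, then $|g^G|$ is adjacent in $\D(G)$ to no other vertex, and that all such $g$ give the same vertex $|g^G| = |G|/|TZ(G)|$. I would begin by fixing $g \in T \setminus Z(G)$; after conjugating we may assume $g$ lies in our chosen torus $T$. Since $\overline{T}$ is a $CC$-group in $\overline{G}$, we have $C_{\overline G}(\overline g) \le \overline T$, and conversely $\overline T$ is abelian so $\overline T \le C_{\overline G}(\overline g)$; hence $C_{\overline G}(\overline g) = \overline T$. Because $|\overline T|$ is coprime to $|Z(G)|$, the preimage of $\overline T$ in $G$ is $T \times Z(G)$ internally (more precisely $TZ(G)$ with $T \cap Z(G) = 1$, using coprimality), so $C_G(g) = TZ(G)$ and therefore $|g^G| = |G|/|TZ(G)|$. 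This value does not depend on which non-central $g \in T^G$ we picked, nor on the $G$-conjugate of $T$, so all these elements contribute the single vertex $|g^G| = |G|/|TZ(G)|$ to $\D(G)$.

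Next I would show this vertex is isolated. Suppose for contradiction there is a non-central $y \in G$ with $y \not\sim_{\text{equal}} g$ in the sense $|y^G| \ne |g^G|$, yet $|y^G|$ divides $|g^G|$ or vice versa; equivalently $|C_G(g)| = |TZ(G)|$ divides $|C_G(y)|$ or $|C_G(y)|$ divides $|TZ(G)|$. I would handle the two divisibility directions separately. For the first direction, suppose $|TZ(G)|$ properly divides $|C_G(y)|$. Write $\pi$ for the set of primes dividing $|\overline T|$. Since $\overline T$ is a Hall $\pi$-subgroup of $\overline G$ (a $CC$-torus is sharp, as recalled in the excerpt) and $\pi$ is a full connected component of the Prime Graph of $G$ not containing $2$, the order $|TZ(G)|$ contains the full $\pi$-part of $|G|$ together with $|Z(G)|$. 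Then $|C_G(y)|$ being a proper multiple of $|TZ(G)|$ forces $|C_G(y)|$ to be divisible by some prime $r \notin \pi$; but $|C_G(y)|$ is then divisible both by a prime in $\pi$ and by $r \notin \pi$, and using the structure of centralisers together with the fact that $\pi$ is an isolated Prime Graph component, any element whose centraliser order is divisible by a prime of $\pi$ must itself be (conjugate into) $T$ — its centraliser is then exactly $TZ(G)$, contradicting properness. For the second direction, $|C_G(y)|$ divides $|TZ(G)|$: then $|C_G(y)|$ is a $\{\pi \cup \pi(Z(G))\}$-number, and in particular $|y|$ is coprime to $2$ if $y$ is semisimple, or $y$ is unipotent; in either case one shows $y$ is conjugate into $T$ (since the only elements with centraliser a $\pi$-group up to the centre are the regular semisimple elements of the $CC$-torus, by the $CC$-property and the Prime Graph component structure), whence again $|C_G(y)| = |TZ(G)| = |C_G(g)|$, so $|y^G| = |g^G|$, not a new vertex.

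The main obstacle, and the step I would spend the most care on, is the case analysis in the second paragraph: precisely ruling out an element $y$ whose centraliser order is a proper divisor or proper multiple of $|TZ(G)|$. The cleanest route is probably to translate everything into the Prime Graph: by Williams' result (quoted in the excerpt) the primes of $\pi$ form a connected component of the Prime Graph of $G$ disjoint from $\{2\}$ and from $\pi(Z(G))$, which means no element of $G$ has order divisible by a prime of $\pi$ together with a prime outside $\pi$. Consequently any $y$ with $r \mid |C_G(y)|$ for some $r \in \pi$ has $C_G(y)$ a $\pi$-group modulo $Z(G)$, hence lies in a $CC$-torus conjugate to $T$ (uniqueness of Hall $\pi$-subgroups up to conjugacy), forcing $C_G(y) = TZ(G)$; and any $y$ with $|C_G(y)|$ coprime to every prime of $\pi$ cannot have $|C_G(y)|$ divisible by $|TZ(G)|$ (which has nontrivial $\pi$-part since $T$ is non-central of odd order $>1$) and cannot be a proper divisor adjacent to $|g^G|$ unless $|C_G(y)| = |TZ(G)|$ forces $\pi$-divisibility — a contradiction unless $|y^G| = |g^G|$. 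Assembling these observations shows no edge of $\D(G)$ is incident to $|g^G|$, completing the proof.
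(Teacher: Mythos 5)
Your first paragraph is sound in substance: $\overline{T}\le C_{\overline G}(\overline g)\le\overline T$ by the $CC$-property, hence $C_G(g)=TZ(G)$ and $|g^G|=|G|/|TZ(G)|$ for every non-central $g\in T^G$, which is exactly how the paper gets the single vertex. (One parenthetical slip: $T\cap Z(G)=1$ is false in general, since $Z(G)$ lies in every maximal torus, so usually $TZ(G)=T$; the coprimality hypothesis concerns $|\overline T|=|T|/|T\cap Z(G)|$. This is harmless because all you need is $C_G(g)=TZ(G)$.)

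The isolation argument, however, has a genuine gap. Both of your divisibility cases rest on the assertion that any element whose centraliser order is divisible by a prime of $\pi$ is conjugate into $T$, which you back up with ``uniqueness of Hall $\pi$-subgroups up to conjugacy''. That principle is not available: in non-soluble groups Hall $\pi$-subgroups need not be conjugate and need not contain every $\pi$-element, so being a Hall $\pi$-subgroup does not give the covering statement you use; it would itself require Williams' detailed analysis, and in your write-up the actual contradiction is only gestured at (``one shows'', ``using the structure of centralisers''). The paper's proof needs no such covering claim at the decisive step: with $\pi$ the primes of $|\overline T|$, Williams' Lemma~5 gives that $\pi$ is a \emph{complete, isolated} component of the Prime Graph not containing $2$. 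If $x\sim g$ with $x\notin T^G$, then $\overline g$ and $\overline x$ have coprime orders, so one may choose a prime $b\mid|\overline x|$ with $b\notin\pi$; then $b$ divides $|C_G(x)|$ but not $|C_G(g)|=|TZ(G)|$, forcing $|C_G(g)|$ to divide $|C_G(x)|$, so some $r\in\pi$ divides $|C_G(x)|$ and there is $z\in C_G(x)$ of order $r$; since $r\nmid|Z(G)|$ we have $\overline z\neq 1$, and the commuting pair $\overline x,\overline z$ produces an element of order divisible by $rb$ in $\overline G$, contradicting completeness of the component $\pi$. This concrete construction of an element of mixed order is the missing idea in your cases (your ``proper multiple'' case in particular only tells you the extra factor involves primes outside $\pi$, which still has to be converted into such an element). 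Finally, in your second case the claim that $|y|$ is coprime to $2$ is unjustified, since $|Z(G)|$ may well be even (e.g.\ symplectic groups), though that sub-claim is not essential to a correct argument.
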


\begin{proof}
Let $a=g^h\in T^G\backslash Z(G).$
Then $|C_G(a)|=|C_G(g^h)| = |C_G(g)|=|T Z(G)|$ for every $a\in
T^G\backslash Z(G)$  and thus
 the non-central elements in $T^G$ form a single vertex in $\D(G)$.
It remains to see that this vertex is isolated.

Suppose to the contrary that $g\in T^G$ is non-central and its conjugacy
class length is not isolated
in $\D(G)$. Then there is a non-central element $x\in G\backslash T^G$ with
$x \sim g$ but $|x^G| \not= |g^G|.$
Let $\pi$ denote the set of prime divisors of $|\overline{T}|$.
Note that by \cite[Lemma~5]{Williams} $\pi$ is a complete component of
the Prime Graph of $G$ containing only odd primes.
As $x\not\in T^G$, the elements $\overline{g}$ and $\overline{x}$ have
coprime order.

Let $b$ be  a prime  with $b\mid
  |\overline{x}|$ and $b\not\in \pi.$ Thus $b$ divides $|C_G(x)|$ but not
$|C_G(g)|$, hence $|C_G(g)| \mid |C_G(x)|$ since $x\sim g.$
Let $r\in \pi$ and let $y\in C_G(x)$ be an
  element of order $r$. Then $x\in C_G(y)$.
As $\gcd( |\overline{T}|, |Z(G)|) = 1$ it follows
  that $y \not\in Z(G)$ and hence $\overline{x}\in
  C_{\overline{G}}(\overline{y}).$ Therefore
  $\overline{x}\overline{y}$ is an element of order $br$ in
$ C_{\overline{G}}(\overline{y})$ and $r$ is connected to $b$ in the
  Prime Graph of $\overline{G}$. This is a contradiction to $
\pi$ being a complete connected component of the Prime Graph.
\end{proof}

\begin{lemma}\label{lem:nonisotori}
Let $G$ be as in Section~$\ref{sec:ourgroups}$.
Let $T$ be a maximal torus
in $G$  for which $\overline{T}$ is a $CC$-group and
$\gcd(|\overline{T}|, |Z(G)|)\not=1.$ Then $|\overline{T}|$ is even.
\end{lemma}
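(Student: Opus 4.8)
The plan is to reduce to the case where $\gcd(|\overline T|,|Z(G)|)$ is divisible by an odd prime $r$, to pin $T$ down using Williams' classification of those maximal tori $\overline T$ that are $CC$-groups, and then to contradict $r\mid|\overline T|$ by an elementary valuation computation.

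First I would note that $Z(\mathbf G)\le\mathbf T$ for every maximal torus $\mathbf T$ of $\mathbf G$, hence $Z(G)\le T$ for every maximal torus $T=\mathbf T^F$ of $G$, so $|\overline T|=|T|/|Z(G)|$ and any common prime divisor of $|\overline T|$ and $|Z(G)|$ divides $|T|$. If every common prime divisor of $|\overline T|$ and $|Z(G)|$ equals $2$ then $|\overline T|$ is even and we are done. So suppose some odd prime $r$ divides both $|\overline T|$ and $|Z(G)|$; I claim this gives a contradiction. Since $q$ is odd, $|Z(G)|$ can have odd order greater than $1$ only when $G$ is of type $A_\ell$, ${}^2A_\ell$ or $C_\ell$ (these being the types for which the group $\Delta$ of Table~\ref{tab:simple}, namely $\GL$, $\GU$ or $\mathrm{GSp}$, has central torus of order $q-1$ or $q+1$), or when $G$ is of type $E_6$ or ${}^2E_6$; in all other types $Z(\Omega)$ and $\Delta/\Omega$, hence $Z(G)$, are $2$-groups, since there $\Delta$ is an orthogonal group with centre $\{\pm 1\}$. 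In each of these five types one checks that $r$ divides $q-\epsilon$, where $\epsilon=1$ in types $A_\ell,C_\ell,E_6$ and $\epsilon=-1$ in types ${}^2A_\ell,{}^2E_6$, and in the classical cases also $r\mid n$.

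Now I would appeal to \cite[Lemma~5]{Williams} and the classification of Prime Graph components behind it: since (as recalled above) the number of components of the Prime Graph of $G$ is at most that of the associated simple group, a maximal torus $T$ of $G$ with $\overline T$ a $CC$-group lies, up to $G$-conjugacy and up to the quotient by $Z(G)$, in a short explicit list — essentially a ``Singer'' torus of order $(q^{n}-\epsilon^{n})/((q-\epsilon)\gcd(n,q-\epsilon))$ with $n$ prime in types $A_\ell,{}^2A_\ell$, a Coxeter torus of order $\sim q^{\ell}+1$ in type $C_\ell$, and a $\Phi_{9}$- respectively $\Phi_{18}$-torus in type $E_6$ respectively ${}^2E_6$. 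In each of these cases $|T|$ is a product of cyclotomic values $\Phi_d(q)$ divided by $\gcd(n,q-\epsilon)$ (respectively $\gcd(3,q-\epsilon)$), and a lifting-the-exponent computation gives $v_r(|T|)=v_r(|Z(G)|)$; hence $v_r(|\overline T|)=0$, contradicting $r\mid|\overline T|$. Passing between the groups $H$ with $\Omega\le H\le\Delta$ changes $|T|$ and $|Z(G)|$ only by divisors of $q-\epsilon$ and does not affect this conclusion, so it suffices to run the computation for one group of each type.

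The valuation computation is routine; the delicate point, and the main obstacle, is the appeal to Williams' classification, which must be read carefully. One needs that in types $A_\ell,{}^2A_\ell$ there is no $CC$-torus when $n$ is composite, and that for the groups $\GL$, $\GU$, $\mathrm{GSp}$ occurring as $\Delta$ the Singer/Coxeter torus fails to be a $CC$-group precisely when $\gcd(n,q-\epsilon)>1$ — so that the odd-prime case considered above is in fact vacuous, and the only way the hypothesis $\gcd(|\overline T|,|Z(G)|)\neq1$ can hold is with $2$ in the gcd, forcing $|\overline T|$ even.
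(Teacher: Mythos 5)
Your reduction to an odd prime $r$ dividing $\gcd(|\overline T|,|Z(G)|)$ and the observation that this can only happen in a handful of types are fine, but the step you yourself flag as the ``main obstacle'' is a genuine gap, and it is not one that careful reading of Williams will close. Williams' Lemma~5 (as quoted in Section~\ref{sec:iso}) characterises, and his tables list, exactly those maximal tori $T$ for which $|\overline T|$ is odd, \emph{coprime to} $|Z(G)|$, and $\overline T$ is a $CC$-group --- i.e.\ the tori whose primes form a prime-graph component. A torus satisfying the hypotheses of this lemma, namely a $CC$-torus with $\gcd(|\overline T|,|Z(G)|)\neq 1$, is precisely one that is \emph{excluded} from that classification, so you cannot look it up there; invoking ``Williams' classification of the $CC$-tori'' to conclude that $T$ is a Singer/Coxeter/$\Phi_9$-torus is circular. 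The auxiliary claims you would need (no $CC$-torus in types $A_\ell,{}^2A_\ell$ when $n$ is composite; the Singer torus fails to be a $CC$-group exactly when $\gcd(n,q-\epsilon)>1$) are left unproved, and they carry essentially the whole weight of the argument. There are also smaller inaccuracies: for $G=\GL(n,q)$ or $G=\mbox{GSp}(n,q)$ the centre is the full scalar group of order $q-1$, so your assertion that $r\mid n$ in the classical cases does not hold in the generality $\Omega\le G\le\Delta$ in which the lemma is stated. Finally, note that you set out to prove something strictly stronger than the lemma (that the odd-prime case is vacuous), whereas the lemma only requires that in that case $|\overline T|$ be even.

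The paper avoids any classification. Assuming $|\overline T|$ odd, it takes an odd prime $r$ dividing both $|\overline T|$ and $|Z(G)|$; in the relevant types ($A_\ell$, ${}^2A_\ell$, $E_6$, ${}^2E_6$) such an $r$ divides the order of the centraliser of an involution in $\overline G$ (this single fact is what is extracted from the proof of Williams' Lemma~5(d)), so $\overline G$ contains an element $\overline g$ of order $2r$. Since a $CC$-torus is a Hall $\pi$-subgroup, the $r$-part of $\overline g$ is conjugate into $\overline T$, and the commuting involution then forces $2$ to divide $|C_{\overline G}(\overline t)|=|\overline T|$, a contradiction. If you want to keep a computational flavour, the fact you should isolate and prove is that involution-centraliser divisibility statement, not a classification of $CC$-tori.
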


\begin{proof}
Let $\pi$ denote the set of divisors of $|\overline{T}|.$ If
$|\overline{T}|$ is even, the result holds. Seeking a contradiction,
we now assume   $|\overline{T}|$ is odd. As
$\gcd(|\overline{T}|, |Z(G)|)\not=1$, there
 is an odd prime $r\in \pi$ dividing $|Z(G)|$. In particular, we are then in case
$A_\ell, {}^2A_\ell$, $E_6(q)$ or ${}^2E_6(q).$ In all of these cases,
$r$ divides the centraliser of an involution in $\overline{G}$,
see the proof of
\cite[Lemma~5(d)]{Williams}. In particular,
 there is an  element $g\in G$ for which $\overline{g}$ has order
 $2r.$ Let $x = g^r$   and $y = g^2$. Since $\overline{T}$ is a
 $CC$-group it is a Hall $\pi$-subgroup there is an
element $\overline{h}\in \overline{G}$ such that
$\overline{t} = \overline{y}^{\overline{h}} \in \overline{T}$. In particular,
$\overline{t}$ has order $a$ and commutes with
$\overline{x}^{\overline{h}}$ which has order $2$. Thus  $2$ divides
$|C_{\overline G}(\overline{t})| = |\overline{T}|$, a contradiction to
our assumption.
\end{proof}

\subsection{Regular semi-simple elements}\label{sec:regsemi}

We now consider regular semisimple elements. For such an element $s$
we know that $C_{\bf G}(s)^\circ = {\bf T}$ (see
\cite[Cor.~14.10]{MalleTesterman}) and hence
${(C_{\bf G}(s)^\circ)}^F = {\bf T}^F = T.$

We first  identify those regular semisimple elements in a finite group
$G$ of Lie type whose centralizers are equal to the maximal torus $T$.
Let $k$ denote the order of the fundamental group of $G$.
Table~\ref{tab:k} yields $k$ (see \cite[pp. 25-26]{Carter}).

\begin{table}[t]
\begin{center}
\begin{tabular}{ll}
Dynkin Diagram & $k$\\
\hline
$A_\ell$ &  $\gcd(\ell+1,q-1)$\\
${}^2A_\ell$ &  $\gcd(\ell+1,q+1)$\\
$B_\ell, C_\ell$ & 2 \\
$D_\ell$ & 4 \\
$G_2, F_4, E_8$ & 1 \\
$E_6$ & 3 \\
$E_7$ & 2 \\
\end{tabular}
\caption{Orders of the fundamental group}\label{tab:k}
\end{center}
\end{table}

The following lemma is \cite[Lemma~4.4]{SpringerSteinberg},
\cite[Proposition~14.20]{MalleTesterman} and \cite[p.~25]{Carter}.
For a semisimple element we use the notation
$C_{\overline{G}}(\overline{s}) = C_{\overline {\bf    G}}(\overline{s})^F$   and
$C_{\overline { G}}(\overline{s})^\circ =  \left(C_{\overline {\bf   G}}(\overline{s})^\circ\right)^F.$

\begin{lemma}\label{lem:centconnected}
Let $G$ be as in Section~$\ref{sec:ourgroups}$ and
let $s$ be a  semisimple element in $G.$
Then $[C_{\overline G}(\overline{s}):C_{\overline
    G}(\overline{s})^\circ]$  divides  the order $k$ of the
Fundamental group  as in Table~\ref{tab:k}.
If $\gcd(|s|, k)=1$
then  $C_{G}(s) = C_{ G}(s)^\circ$.   In particular,
if $\gcd(|s|, k)=1$  and $s$ is regular semisimple, then the unique
maximal torus $T$
containing $s$ is $C_G(s).$
\end{lemma}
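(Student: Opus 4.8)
The plan is to read all three assertions off the cited structure theorems for centralisers of semisimple elements, the only real work being to pass from the algebraic group to its group of $F$-fixed points and to keep track of the isogeny type. For a semisimple element $t$ of a connected reductive group ${\bf H}$ write $A_{\bf H}(t)=C_{\bf H}(t)/C_{\bf H}(t)^\circ$ for the component group of its centraliser. By \cite[Lemma~4.4]{SpringerSteinberg} and \cite[Proposition~14.20]{MalleTesterman} (see also \cite[p.~25]{Carter}), $A_{\bf H}(t)$ is a finite abelian group which embeds $F$-equivariantly into the fundamental group $\pi_1({\bf H})$, and its image lies in the $|t|$-torsion subgroup of $\pi_1({\bf H})$: if $t=\mu(\zeta)$ with $\zeta$ a root of unity of order $|t|$, then the image is generated by classes of the shape $(w-1)\mu \bmod \Z\Phi^\vee$.

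First I would apply this to ${\bf H}=\overline{\bf G}$, the adjoint algebraic group with $\overline{\bf G}^F=\overline G$. Here $\pi_1(\overline{\bf G})$ is the quotient of the weight lattice by the root lattice, and the order of $\pi_1(\overline{\bf G})^F$ equals the value $k$ in Table~\ref{tab:k}: for types $A_\ell$ and ${}^2A_\ell$ the Frobenius acts on $\Z/(\ell+1)$ as multiplication by $q$, resp.\ $-q$, which gives $\gcd(\ell+1,q-1)$, resp.\ $\gcd(\ell+1,q+1)$, while for the remaining types $|\pi_1(\overline{\bf G})^F|$ divides the listed value. Since $C_{\overline{\bf G}}(\overline s)^\circ$ is connected and $F$-stable, Lang's theorem yields $(C_{\overline{\bf G}}(\overline s))^F/(C_{\overline{\bf G}}(\overline s)^\circ)^F\cong A_{\overline{\bf G}}(\overline s)^F$, so that $[C_{\overline G}(\overline s):C_{\overline G}(\overline s)^\circ]=|A_{\overline{\bf G}}(\overline s)^F|$ divides $|\pi_1(\overline{\bf G})^F|=k$; this is the first assertion.

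For the second assertion I would apply the same statement to the algebraic group ${\bf G}$ with ${\bf G}^F=G$: any prime dividing $|A_{\bf G}(s)|$ divides $|s|$, since the image of $A_{\bf G}(s)$ in $\pi_1({\bf G})$ is $|s|$-torsion, and also divides $k$, since $|\pi_1({\bf G})|$ divides the Table~\ref{tab:k} value; hence $\gcd(|s|,k)=1$ forces $A_{\bf G}(s)=1$, i.e.\ $C_{\bf G}(s)=C_{\bf G}(s)^\circ$, and passing to $F$-fixed points gives $C_G(s)=C_G(s)^\circ$. The last clause then follows immediately: if $s$ is in addition regular semisimple, then $C_{\bf G}(s)^\circ={\bf T}$ by \cite[Cor.~14.10]{MalleTesterman}, so $C_G(s)=(C_{\bf G}(s))^F={\bf T}^F=T$, and $T$ is the unique maximal torus of $G$ containing $s$ because any $F$-stable maximal torus ${\bf T}'$ of ${\bf G}$ with $s\in ({\bf T}')^F$ satisfies ${\bf T}'\le C_{\bf G}(s)^\circ={\bf T}$, whence ${\bf T}'={\bf T}$. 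The one point needing care throughout is matching the general statements of \cite{SpringerSteinberg,MalleTesterman,Carter} to the specific groups of Table~\ref{tab:simple} — which are in general neither simply connected nor adjoint — and to their quotients $\overline G$, while correctly tracking the Frobenius action on the fundamental group that is responsible for the $\gcd$ entries in Table~\ref{tab:k}.
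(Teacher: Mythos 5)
The paper gives no argument for this lemma at all --- it simply declares it to be \cite[Lemma~4.4]{SpringerSteinberg}, \cite[Proposition~14.20]{MalleTesterman} and \cite[p.~25]{Carter} --- so your reconstruction of the underlying argument is more than the paper supplies, and your first and third assertions are handled correctly: the Lang-theorem identification $[C_{\overline G}(\overline s):C_{\overline G}(\overline s)^\circ]=|A_{\overline{\bf G}}(\overline s)^F|$ together with the $F$-equivariant embedding of the component group into the fundamental group, and the uniqueness of the maximal torus through a regular semisimple element, are exactly the standard route.

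There is, however, one step in your second paragraph that would fail as written. You argue that every prime dividing $|A_{\bf G}(s)|$ divides $k$ ``since $|\pi_1({\bf G})|$ divides the Table~\ref{tab:k} value,'' and conclude $A_{\bf G}(s)=1$, i.e.\ $C_{\bf G}(s)=C_{\bf G}(s)^\circ$ at the level of algebraic groups. For adjoint type $A_\ell$ the fundamental group has order $\ell+1$, whereas the table entry is $k=\gcd(\ell+1,q-1)$, which is in general strictly smaller; the table values for types $A_\ell$ and ${}^2A_\ell$ are orders of $F$-fixed subgroups of $\pi_1$, not of $\pi_1$ itself. The conclusion you draw is also genuinely false at the algebraic-group level: a quasi-isolated semisimple element of order $3$ in the adjoint group of type $A_2$ has $A_{\bf G}(s)\cong\Z/3\Z$ even when $q\equiv 2\pmod 3$, so $k=1$ and $\gcd(|s|,k)=1$, yet $C_{\bf G}(s)\neq C_{\bf G}(s)^\circ$. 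The assertion of the lemma is only about the finite groups $C_G(s)=(C_{\bf G}(s))^F$ and $C_G(s)^\circ=(C_{\bf G}(s)^\circ)^F$, and the repair is simply to run the same argument you already used for $\overline G$ in your first paragraph: $[C_G(s):C_G(s)^\circ]=|A_{\bf G}(s)^F|$, the group $A_{\bf G}(s)^F$ embeds into $\pi_1({\bf G})^F$ whose order divides $k$, and being a subgroup of $A_{\bf G}(s)$ it is abelian of exponent dividing $|s|$, so every prime dividing its order divides $\gcd(|s|,k)=1$ and it is trivial. With that substitution the proof is complete and consistent with the sources the paper cites.
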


 \begin{lemma}\label{lem:aux}
Let $G$ be as in Section~$\ref{sec:ourgroups}$ and
let $s$ be a regular semisimple element in $G.$
 If $C_{\overline G}({\overline s}) > C_{\overline
   G}(\overline{s})^\circ=\overline{T}$ then $s$ is
 equivalent to a unipotent element in $G$.
 \end{lemma}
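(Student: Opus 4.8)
The plan is to exploit the index hypothesis $[C_{\overline G}(\overline s):C_{\overline G}(\overline s)^\circ]>1$ to produce an element commuting with $\overline s$ whose order is coprime to $|\overline s|$, and then invoke Lemma~\ref{lem:sim}(3) together with the earlier structural results. First I would recall from Lemma~\ref{lem:centconnected} that $[C_{\overline G}(\overline s):C_{\overline G}(\overline s)^\circ]$ divides the order $k$ of the fundamental group listed in Table~\ref{tab:k}; since $s$ is regular semisimple, $C_{\overline G}(\overline s)^\circ=\overline T$ for the unique maximal torus $T$ containing $s$. The hypothesis says this index is a nontrivial divisor of $k$, so there is a prime $r$ dividing $k$ with $r\mid [C_{\overline G}(\overline s):\overline T]$. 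Pick a coset representative $\overline x\in C_{\overline G}(\overline s)\setminus\overline T$ whose image in the quotient has order $r$; replacing $\overline x$ by a suitable power we may assume $\overline x$ has $r$-power order modulo $\overline T$, and in fact (adjusting by an element of $\overline T$ if necessary, using that $r$ divides $k$ which is small and coprime to the relevant torus orders in the cases that arise) we can arrange that $\overline x$ itself is an $r$-element of $C_{\overline G}(\overline s)$ not lying in $\overline T$.

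The key observation is then that $\overline x$ and $\overline s$ commute, and we want $\gcd(|\overline x|,|\overline s|)$ under control. If $r\nmid|\overline s|$, then by Lemma~\ref{lem:sim}(1),(3) applied to $\overline x$ and an $r'$-part of $\overline s$ (more precisely, to $\overline s$ and a power of $\overline x$ of prime order $r$), we get $s$ equivalent to $x$; and since $\overline x$ has $r$-power order with $r\mid k$, in the classical cases $r=2$ or $r$ is an odd prime dividing the centre, so $|C_{\overline G}(\overline x)|$ is divisible by $2$ (when $r=2$, $x$ is an involution and we apply Lemma~\ref{lem:inv}; otherwise $x$ is a semisimple element of odd prime order and $|C_{\overline G}(\overline x)|$ is even because — as in the proof of Lemma~\ref{lem:nonisotori} and \cite[Lemma~5(d)]{Williams} — such an $r$ divides the centraliser of an involution). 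Either way Lemma~\ref{lem:evencent} or Lemma~\ref{lem:inv} shows $x$ is equivalent to a unipotent element, hence so is $s$. If instead $r\mid|\overline s|$, then I would replace $s$ by a power $s^b$ so that $\overline{s}^{\,b}$ has order coprime to $r$ but is still non-central (possible since $\overline s$ is regular and $s^b$ regular as long as the power is prime to $|\overline s|/(\text{its }r\text{-part})$, or alternatively pass to the $r'$-part directly); since $\gcd(|s|,k)=1$ would force $C_G(s)=C_G(s)^\circ$ by Lemma~\ref{lem:centconnected}, contradicting the hypothesis, we actually know $r\mid|s|$ only makes the torus non-$r'$, and a short case analysis on the entries of Table~\ref{tab:k} (where $k\in\{2,3,4,\gcd(\ell+1,q\mp1)\}$) pins down which $r$ occur; in each case $r$ is $2$ or an odd prime dividing $|Z(G)|$, and then the argument of Lemma~\ref{lem:evencent} applies to show $|C_{\overline G}(\overline s)|$ is even and $s$ is equivalent to an involution, hence to a unipotent element by Lemma~\ref{lem:inv}.

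The main obstacle I anticipate is the bookkeeping in the step ``replacing $\overline x$ by an $r$-element not in $\overline T$'': lifting a coset of order $r$ in $C_{\overline G}(\overline s)/\overline T$ to an actual $r$-element of $C_{\overline G}(\overline s)$ requires knowing that the extension splits on the relevant Sylow $r$-subgroup, or equivalently that $r\nmid|\overline T|$ for the tori in question — which is exactly the kind of statement one reads off Table~\ref{tab:k} and the structure of component groups of centralisers, but must be checked type-by-type for $A_\ell,{}^2A_\ell,B_\ell,C_\ell,D_\ell,{}^2D_\ell,E_6,{}^2E_6,E_7$. Once that lifting is in hand, the remainder is a direct appeal to Lemmas~\ref{lem:sim}, \ref{lem:inv} and~\ref{lem:evencent}.
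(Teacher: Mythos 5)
Your opening reduction agrees with the paper's: if $|C_{\overline G}(\overline s)|$ is even you are done by Lemma~\ref{lem:evencent}, so the real content of the lemma is the case where $|C_{\overline G}(\overline s)|$ is odd, which by Lemma~\ref{lem:centconnected} and Table~\ref{tab:k} only arises in types $A_\ell$, ${}^2A_\ell$, $E_6$, ${}^2E_6$. (Incidentally, the obstacle you flag about lifting a coset of order $r$ to an $r$-element is not a real obstacle: take any preimage $\overline x$ of the coset and replace it by its $r$-part $\overline x_r$; its image in $C_{\overline G}(\overline s)/\overline T$ is the $r$-part of the image of $\overline x$, hence nontrivial, so $\overline x_r\notin\overline T$. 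No splitting or type-by-type check is needed for that step.)

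The genuine gap is in how you dispose of this odd case. You argue that the odd-order $r$-element $\overline x\in C_{\overline G}(\overline s)$ (or $\overline s$ itself) has centraliser of even order ``because such an $r$ divides the centraliser of an involution'' as in Lemma~\ref{lem:nonisotori} and Williams. That citation only gives the existence of \emph{some} element of order $r$ in $\overline G$ commuting with an involution, i.e.\ an edge $\{2,r\}$ in the Prime Graph; it says nothing about the particular conjugacy class of $\overline x$, and different classes of elements of order $r$ can have centralisers of different parity. Worse, in the branch $r\mid|\overline s|$ you conclude that $|C_{\overline G}(\overline s)|$ is even, which flatly contradicts the standing assumption of the case you are in, so the argument is circular rather than a proof. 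The paper's proof has to do real work exactly here: it first reduces (by powering $\overline s$ to prime order and using $C_{\overline G}(\overline s)\le C_{\overline G}(\overline s^m)$ together with Lemma~\ref{lem:centconnected}) to $|\overline s|$ an odd prime dividing $k$; then in types $A_\ell,{}^2A_\ell$ it shows this configuration cannot occur at all, because a regular semisimple element of order dividing $k\mid q-\epsilon$ is diagonalisable with distinct eigenvalues, hence lies in a torus $\Z_{q-\epsilon}^n$ whose image in $\overline G$ has even order for $n\ge3$, contradicting oddness; and in $E_6,{}^2E_6$ it invokes the explicit list of centraliser structures (Malle--Testerman, Table~26.1) to see that $|C_G(s)|$ is divisible by $q$, giving equivalence to a unipotent element directly rather than via an involution. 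Neither of these two case analyses, which are the heart of the lemma, appears in your proposal, so as written it does not close.
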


 \begin{proof}
 Suppose $C_{\overline{G}}(\overline{s}) > C_{\overline
   G}(\overline{s})^\circ=\overline{T}$.
The result follows from Lemma~\ref{lem:evencent} when
$|C_{\overline G}(\overline{s})|$ is even. Thus we may assume
$|C_{\overline G}(\overline{s})|$ is odd. In particular,
$[C_{\overline G}(\overline{s}):C_{\overline  G}(\overline{s})^\circ]$
is odd and also divides  the  order $k$ of the  Fundamental group  as in
 Table~\ref{tab:k}.  This implies we are
 either in case $A_\ell$, ${}^2A_\ell$, $E_6$ or  ${}^2E_6.$

If $|\overline{s}|$ is not prime, then we can choose $m\in \N$ such
that $|\overline{s}^m|$ is prime. If $\overline{s}^m$ is not regular,
then $\overline{s}^m$, and thus also $\overline{s}$, is equivalent to
a unipotent element by  Lemma~\ref{lem:semi-non-reg}. Thus we may
assume $\overline{s}^m$ is regular. Then $T$ is the unique torus containing
$\overline{s}^m$. Moreover, $\overline{s}^m \in \overline{T} =
C_{\overline   G}(\overline{s})^\circ <
C_{\overline{G}}(\overline{s}) \le
C_{\overline{G}}(\overline{s}^m).$ Thus, by
Lemma~\ref{lem:centconnected},
$|\overline{s}^m|$ is prime divisor of $k$.
Hence, replacing $\overline{s}$ by  $\overline{s}^m$ if necessary,
we may  now assume that $|\overline{s}|$ is an  odd prime dividing
$k$.

Suppose first we are in
case $A_\ell$ or ${}^2A_\ell$.  In  this case, a semisimple element in
$\GL^\epsilon(n,q)$  is  regular if  and  only  if its  characteristic
polynomial is equal to its minimal polynomial.
 As  $k$ divides  $q-\epsilon$,  a
regular  semisimple element  of order  dividing  $k$ is  similar to  a
diagonal  matrix   with  pairwise   distinct  entries   in  underlying
field.  Thus   the  unique  torus  inside   $\GL(n,q)$  or  $\GU(n,q)$
containing this element is  isomorphic to $\Z_{q -\epsilon}^n$.  Inside
$\overline{G}$ it follows that $|\overline{T}|$  is even for $n\ge 3$,
contrary to  our assumption that $|C_{\overline  G}(\overline{s})|$ is
odd. Hence this case cannot arise.

Now consider the case that $\overline{G}=E_6(q)$  or $\overline{G} =
{}^2E_6(q)$. By \cite[Example~26.11]{MalleTesterman}
$C_{G}(\overline{s})$ is one of the centralizers in
\cite[Table~26.1]{MalleTesterman} and divisible by $q$. In particular, $s$ is
equivalent to a unipotent element.
 \end{proof}

 \begin{lemma}\label{lem:semiregandnonreg}
Let $G$ be as in Section~$\ref{sec:ourgroups}$ and
let $s$ be a regular semisimple element in $G$ such that
 $C_G(s)^\circ$ a maximal torus which is not a $CC$-group.
Moreover, suppose
  $C_{\overline G}(\overline{s}) =  C_{\overline G}({\overline s})^\circ=\overline{T}$.
Then  $s$ is
 equivalent to a unipotent element.
 \end{lemma}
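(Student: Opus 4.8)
I want to show that a regular semisimple $s$ whose centralizer $\overline{T}=C_{\overline G}(\overline s)$ is a maximal torus which is \emph{not} a $CC$-group is equivalent to a unipotent element. The idea is to exploit the failure of the $CC$-property directly: there is an element $t\in \overline{T}\backslash Z(\overline G)$ — here $Z(\overline G)=1$ — with $C_{\overline G}(\overline t)\not\le \overline{T}$. I would like to arrange that $\overline t$ has prime order, so that via Lemma~\ref{lem:sim}(1),(2) I can move between $\overline s$ and $\overline t$ inside $\overline T$, and then use that $C_{\overline G}(\overline t)$ is strictly larger than $\overline{T}$ to produce the divisibility of centralizer orders I need.

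**Key steps.** First I would reduce to the case where $|\overline s|$ is prime: if not, pick $m$ with $|\overline s^{\,m}|$ prime; if $\overline s^{\,m}$ is non-regular we are done by Lemma~\ref{lem:semi-non-reg}, and otherwise $\overline s^{\,m}$ lies in the same unique torus $\overline T$, and $\overline s\sim\overline s^{\,m}$ by Lemma~\ref{lem:sim}(2), so we may replace $\overline s$ by $\overline s^{\,m}$. (Here one should be slightly careful that $\overline T$ remains the connected centralizer of the power and that it is still not a $CC$-group; since $\overline T$ is a maximal torus and $C_{\overline G}(\overline s)=\overline T$ already, the power has centralizer containing $\overline T$, so its connected component is again $\overline T$, and the $CC$-failure is a property of $\overline T$ itself.) Next, since $\overline T$ is not a $CC$-group, there is $t\in T$ with $\overline t\neq 1$ and some $g\in G$ with $\overline g\notin \overline T$ but $\overline g$ centralizing $\overline t$. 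Replacing $t$ by a suitable power I may take $|\overline t|$ to be a prime $r$. Because $\overline s$ also has prime order and both lie in the abelian group $\overline T$, Lemma~\ref{lem:sim}(1) (after passing to coprime-order powers if the two primes coincide — but in fact if $|\overline s|=|\overline t|=r$ then already $\overline s$ and $\overline t$ generate an elementary abelian $r$-subgroup of $\overline T$ and their product has the same centralizer properties) shows $\overline s$ is equivalent to $\overline t$ inside $\overline T$. Then I would apply Lemma~\ref{lem:sim}(3) to lift this equivalence to $G$, concluding $s$ is equivalent to $t$.

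**Finishing the argument.** It remains to see $t$ itself is equivalent to a unipotent element of $G$. The element $\overline t$ is semisimple (as $r\mid|\overline T|$ and $\overline T$ has order coprime to $p$), and by construction $C_{\overline G}(\overline t)\supsetneq \overline T$. If $|C_{\overline G}(\overline t)|$ is even, Lemma~\ref{lem:evencent} immediately gives that $t$ is equivalent to a unipotent element, and we are done. So suppose $|C_{\overline G}(\overline t)|$ is odd. Then $\overline t$ cannot be regular — a regular semisimple element has connected centralizer equal to its unique maximal torus, and were $\overline t$ regular its (full) centralizer would, by Lemma~\ref{lem:centconnected}, have $[C_{\overline G}(\overline t):\overline T']$ dividing $k$ and hence be $\overline T'=C_{\overline G}(\overline t)^\circ$ itself, forcing $C_{\overline G}(\overline t)$ to be a torus inside which $\overline t$ is regular; but then, exactly as in the proof of Lemma~\ref{lem:aux}, odd order of this torus is impossible in all the relevant types (the diagonal torus $\cong \Z_{q-\epsilon}^n$ in types $A_\ell,{}^2A_\ell$ with $n\ge3$ has even order, and in types $B,C,D$ the fundamental group has order $2$ so any torus containing a regular element of odd prime order would again be forced even). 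Hence $\overline t$ is non-regular semisimple, and Lemma~\ref{lem:semi-non-reg} gives that $t$ is equivalent to a unipotent element. Combining, $s\sim t\sim(\text{unipotent})$, as required.

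**Main obstacle.** The delicate point is the bookkeeping around passing to prime-order powers of $\overline s$ while preserving both "the connected centralizer is the torus $\overline T$" and "$\overline T$ is not a $CC$-group," and making sure the coprimality hypotheses of Lemma~\ref{lem:sim}(1),(3) are genuinely met (the awkward sub-case being $|\overline s|=|\overline t|$, where one must instead argue directly inside the abelian torus $\overline T$ rather than via coprime orders). The rest is a matter of invoking Lemma~\ref{lem:evencent}, Lemma~\ref{lem:semi-non-reg} and Lemma~\ref{lem:centconnected} in the right order, together with the type-by-type observation, already made in Lemma~\ref{lem:aux}, that a regular semisimple element of odd prime order forces an even-order torus in every type under consideration.
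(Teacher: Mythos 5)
There are genuine gaps, and both stem from missing the one observation that makes this lemma easy. The hypothesis $C_{\overline G}(\overline{s})=\overline{T}$ already does all the work for the first half: it forces $C_G(s)=TZ(G)$, and since $T$ is abelian and $Z(G)\le C_G(t)$ for every $t\in T$, you get $C_G(s)=TZ(G)\le C_G(t)$ for \emph{every} $t\in T$, so $|C_G(s)|$ divides $|C_G(t)|$ and $s\sim t$ with no coprimality condition, no reduction to prime order, and no appeal to Lemma~\ref{lem:sim} at all. This is the paper's argument. Your route instead needs $\gcd(|\overline{s}|,|\overline{t}|)=1$ to invoke Lemma~\ref{lem:sim}(1) and (3), and in the sub-case $|\overline{s}|=|\overline{t}|=r$ your patch (``their product has the same centralizer properties'') is not an argument: both elements have prime order, so there are no coprime-order powers to pass to, and nothing you have said relates $|C_G(st)|$ to $|C_G(s)|$ or $|C_G(t)|$ by divisibility in the direction you need. (Your preliminary reduction of $\overline{s}$ to prime order also quietly needs that the \emph{full} centralizer of the power is still $\overline{T}$; if it is strictly larger you are still fine, but only via Lemma~\ref{lem:aux}, which you do not invoke at that point.)

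The finishing step also has a flaw. Once you have a non-central $t\in T$ with $C_{\overline G}(\overline{t})>\overline{T}$, the correct dichotomy is: if $\overline{t}$ is non-regular, use Lemma~\ref{lem:semi-non-reg}; if $\overline{t}$ is regular, then its unique maximal torus is $\overline{T}$, so $C_{\overline G}(\overline{t})>C_{\overline G}(\overline{t})^\circ=\overline{T}$ and Lemma~\ref{lem:aux} applies verbatim --- no parity analysis is needed. Your attempt to \emph{exclude} the case ``regular with odd centralizer'' is not sound: Lemma~\ref{lem:centconnected} only gives that the index of the connected centralizer divides $k$, not that it equals $1$, so the step ``and hence be the connected centralizer itself'' is a non sequitur; moreover your type-by-type claim omits $E_6$ and ${}^2E_6$ (where $k=3$, and where Lemma~\ref{lem:aux} does not prove impossibility but instead reads divisibility by $q$ off the centralizer tables) and misstates $k$ for $D_\ell$ (which is $4$, not $2$). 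With the two repairs --- the direct containment $C_G(s)=TZ(G)\le C_G(t)$, and citing Lemma~\ref{lem:aux} directly when $t$ is regular --- your argument collapses to the paper's proof.
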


 \begin{proof}
 For any $t\in T$ we have $Z(G) \le C_G(t)$ and,
 since $T$ is abelian,
 $T \le C_G(t)$.  Moreover, $\overline{C_G(t)} \le
 C_{\overline{G}}(\overline{t})$. For $t=s$ this implies $\overline{T} \le
 \overline{C_G(s)} \le
 C_{\overline{G}}(\overline{s}) = \overline{T}$ and hence  $C_G(s) = T.Z(G)$
 and  $C_G(s) \le C_G(t)$ for all $t\in T$, implying
 $s\sim t.$

 As $\overline{T}$ is not a $CC$-group,
 there is one  element $t\in T$ such that $\overline{T} < C_{\overline
   G}(\overline{t})$.
 If  $t$ is  non-regular semisimple, then
 by Lemma~\ref{lem:semi-non-reg}    there is a
 unipotent element $u$  equivalent to $t$ while for
 $t$ regular semisimple such a $u$ exists by
 Lemma~\ref{lem:aux}.
 Therefore, $s$ and $u$ are also
 equivalent.
 \end{proof}

 \begin{lemma}\label{lem:semi2}
Let $G$ be as in Section~$\ref{sec:ourgroups}$.
 Then every non-central  semisimple element $s\in G$
 is either equivalent to a unipotent element of $G$ or  $|s^G|$ is an
 isolated vertex of $\D(G)$.
 \end{lemma}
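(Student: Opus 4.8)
The plan is to combine the results established in the preceding lemmas according to the Jordan-decomposition reduction already sketched at the start of Section~\ref{sec:regsemi}. Let $s\in G$ be a non-central semisimple element. First I would observe that $s$ is either regular or non-regular. If $s$ is non-regular, then Lemma~\ref{lem:semi-non-reg} immediately gives that $s$ is equivalent to a unipotent element, and we are done. So the substance of the proof lies in the regular semisimple case, where I would further split according to whether $C_{\overline G}(\overline s)$ strictly contains its connected component $\overline T$ or equals it.

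In the case $C_{\overline G}(\overline s) > C_{\overline G}(\overline s)^\circ = \overline T$, the conclusion that $s$ is equivalent to a unipotent element is exactly Lemma~\ref{lem:aux}. In the remaining case $C_{\overline G}(\overline s) = C_{\overline G}(\overline s)^\circ = \overline T$, I would split once more according to whether the maximal torus $T = C_{\bf G}(s)^\circ{}^F$ is a $CC$-group in the sense of Section~\ref{sec:iso}. If $\overline T$ is not a $CC$-group, then Lemma~\ref{lem:semiregandnonreg} applies directly and yields that $s$ is equivalent to a unipotent element. If $\overline T$ \emph{is} a $CC$-group, then since $C_{\overline G}(\overline s) = \overline T$ we have $|C_{\overline G}(\overline s)| = |\overline T|$; I would then distinguish by the parity of $|\overline T|$. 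If $|\overline T|$ is even, then $|C_{\overline G}(\overline s)|$ is even and Lemma~\ref{lem:evencent} gives equivalence to a unipotent element. If $|\overline T|$ is odd, then I must also check that $\gcd(|\overline T|,|Z(G)|)=1$: this follows from Lemma~\ref{lem:nonisotori}, whose contrapositive says that a $CC$-group torus with $\gcd(|\overline T|,|Z(G)|)\neq 1$ has even order. Hence $\overline T$ has odd order coprime to $|Z(G)|$ and is a $CC$-group, so Lemma~\ref{lem:isotori} applies and tells us that $|s^G|$ is an isolated vertex of $\D(G)$.

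Assembling these cases: every non-central semisimple $s$ falls into exactly one of the branches above, and in each branch either $s$ is equivalent to a unipotent element or $|s^G|$ is isolated. I expect the only delicate point to be bookkeeping the logical structure of the case split cleanly — in particular, making sure the hypothesis $C_{\overline G}(\overline s) = C_{\overline G}(\overline s)^\circ = \overline T$ required by Lemma~\ref{lem:semiregandnonreg} is exactly what is available, and that the passage from $C_{\overline G}(\overline s) = \overline T$ to $|C_{\overline G}(\overline s)| = |\overline T|$ together with the $CC$-group hypothesis genuinely places us in the situation of Lemma~\ref{lem:isotori} (via Lemma~\ref{lem:nonisotori} for the coprimality). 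There is no new computation here; the work is entirely in invoking the right lemma in the right branch.
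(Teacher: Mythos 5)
Your proposal is correct and is essentially the paper's own argument: it invokes the same six lemmas (Lemmas~\ref{lem:semi-non-reg}, \ref{lem:aux}, \ref{lem:semiregandnonreg}, \ref{lem:evencent}, \ref{lem:nonisotori} and \ref{lem:isotori}) in the same roles, differing only in the order of the case split (the paper disposes of even centralisers and $CC$-tori first and then splits on regularity, while you split on regularity first). The reorganisation is harmless, since in your $CC$-torus branch the parity/coprimality bookkeeping via Lemma~\ref{lem:nonisotori} is exactly what the paper does in its Case~2.
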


\begin{proof}
 As $\overline{s}\in \overline{G}$ is  semisimple, there exists a
 maximal torus $\overline{T}$ of $\overline{G}$
such that $\overline{s}\in \overline{T}$. We consider the following
cases:

{\bf Case 1:}\quad $|C_{\overline{G}}(\overline{s})|$ is even. Then
$s$ is equivalent to a unipotent element by Lemma~\ref{lem:evencent}.
 From now on we assume that  $|C_{\overline{G}}(\overline{s})|$ is
 odd.

 {\bf Case 2:}\quad   $\overline{T}$ is a $CC$-subgroup.
Then   $\gcd(|Z(G)|, |\overline{T}|)=1$ by Lemma~\ref{lem:nonisotori}
as in particular $|\overline{T}|$ is odd. Therefore
$s$ is related to an isolated vertex by Lemma~\ref{lem:isotori}.
From now on we assume that $\overline{T}$ is not a $CC$-group.

{\bf Case 3:}  $s$ is regular semisimple, i.e.
$C_{\overline G}(\overline{s})^\circ=\overline{TZ(G)}.$
 If $C_{\overline G}({\overline s}) > C_{\overline
   G}(\overline{s})^\circ=\overline{TZ(G)}$ then $s$ is
 equivalent to a unipotent element in $G$ by Lemma~\ref{lem:aux}.
 If $C_{\overline G}({\overline s}) = C_{\overline
   G}(\overline{s})^\circ=\overline{TZ(G)}$ then $s$ is
 equivalent to a unipotent element in $G$ by Lemma~\ref{lem:semiregandnonreg}.

{\bf Case 4:} $s$ is a  non-regular semisimple element.
By Lemma~\ref{lem:semi-non-reg} $s$ is equivalent to a unipotent element.
\end{proof}

\section*{acknowledgements}
We  thank  Frank  L\"ubeck  for  providing   lists  of  the  generic
conjugacy class sizes  of the exceptional groups  $E_7(q)$ computed in
{\sf  GAP} and for helpful discussions.
The  third  author
acknowledges  the  support  of  the
Australian Research Council Discovery Project DP140100416.

\bibliographystyle{abbrvnat}
\def\cprime{$'$}

\bigskip
\noindent
Adeleh~Abdolghafourian\\
Department of Mathematics,\\
Yazd University, Yazd, 89195-741,\\
Iran\\
{\tt iranmanesh@yazd.ac.ir}

\bigskip
\noindent
Mohammad~A.~Iranmanesh\\
Department of Mathematics,\\
Yazd University, Yazd, 89195-741,\\
Iran\\
{\tt iranmanesh@yazd.ac.ir}

\bigskip
\noindent
Alice~C.~Niemeyer\\
Lehrstuhl B f\"ur Mathematik,\\
Pontdriesch 10-16, RWTH Aachen University, 52062 Aachen,\\
Germany\\
{\tt alice.niemeyer@mathb.rwth-aachen.de}

\end{document}